\undefined \DeclareGraphicsRule{*}{eps}{*}{} \else
\newtheorem{theorem}{Theorem}[section]
\newtheorem{lemma}[theorem]{Lemma}
\newtheorem{cor}[theorem]{Corollary}
\newtheorem{prop}[theorem]{Proposition}
\newtheorem{remark}[theorem]{Remark}
\begin{document}

\title{Additively irreducible sequences in commutative semigroups}

\author{
Guoqing Wang\\
\small{Department of Mathematics, Tianjin Polytechnic University, Tianjin, 300387, P. R. China}\\
\small{Email: gqwang1979@aliyun.com}
}

\date{}
\maketitle

\begin{abstract} Let $\mathcal{S}$ be a commutative semigroup, and let $T$ be a sequence of
terms from the semigroup $\mathcal{S}$. We call $T$ an (additively) {\sl irreducible} sequence provided that no sum of its some terms vanishes.
Given any element $a$ of $\mathcal{S}$, let ${\rm D}_a(\mathcal{S})$ be the largest length of the irreducible sequence such that the sum of all terms from the sequence is equal to $a$.
In case that any ascending chain of principal ideals starting from the ideal $(a)$ terminates in $\mathcal{S}$, we found the sufficient and necessary conditions of ${\rm D}_a(\mathcal{S})$ being finite, and in particular, we gave sharp lower and upper bounds of ${\rm D}_a(\mathcal{S})$ in case ${\rm D}_a(\mathcal{S})$ is finite. We also applied the result to commutative unitary rings. As a special case, the value of ${\rm D}_a(\mathcal{S})$ was determined when $\mathcal{S}$ is the multiplicative semigroup of any finite commutative principal ideal unitary ring.

\end{abstract}

\noindent{\sl Key Words}:  {\small Irreducible sequences; Davenport constant; Noetherian semigroups;  Finite principal ideal rings; Sch${\rm \ddot{u}}$tzenberger groups}

\section {Introduction}

We begin this section with some notations in Factorization Theory, which were introduced by A. Geroldinger and F. Halter-Koch \cite{GH} and now have been also widely used in the research of additive problems associated with sequences in groups.

Throughout this paper, we always denote $\mathcal{S}$ to be a commutative semigroup. For any commutative ring $R$, we denote $\mathcal{S}_R$ to be the multiplicative semigroup of the ring $R$. The operation of the semigroup $\mathcal{S}$ is denoted by ``$+$".
The identity element of $\mathcal{S}$, denoted $0_{\mathcal{S}}$ (if exists), is the unique element $e$ of
$\mathcal{S}$ such that $e+a=a$ for every $a\in \mathcal{S}$. The zero element of $\mathcal{S}$, denoted
$\infty_{\mathcal{S}}$ (if exists), is the unique element $z$ of $\mathcal{S}$ such that $z+a=z$ for every
$a\in \mathcal{S}$. If $\mathcal{S}$ has an identity element $0_{\mathcal{S}}$, we call $\mathcal{S}$ a monoid and let
$${\rm U}(\mathcal{S})=\{a\in \mathcal{S}: a+a'=0_{\mathcal{S}} \mbox{ for some }a'\in \mathcal{S}\}$$ be the group of units
of $\mathcal{S}$.
Let
$$\begin{array}{llll}\mathcal{S}^{0}=\left\{\begin{array}{llll}
               \mathcal{S},  & \mbox{if \ \ } \mathcal{S} \mbox{ has an identity element};\\
               \mathcal{S}\cup \{0\},  & \mbox{if \ \ } \mathcal{S} \mbox{ does not have an identity element,}\\
              \end{array}
              \right.
\end{array}$$
be the monoid by adjoining an identity element to $\mathcal{S}$ only when necessary.
Let $$\mathcal{S}^{\bullet}=\mathcal{S}^{0}\setminus \{0_{\mathcal{S}}\}.$$
Let ${\cal F}(\mathcal{S})$
be the free commutative monoid, multiplicatively written, with basis
$\mathcal{S}$. Then any $T\in {\cal F}(\mathcal{S})$, say $T=a_1a_2\cdot\ldots\cdot a_{\ell}$, is a sequence of all
terms $a_i$ from $\mathcal{S}$, which can be also denoted as $$T=\prod\limits_{a\in \mathcal{S}}a^{[{\rm v}_a(T)]},$$ where $[{\rm v}_a(T)]$ means that the element $a$ occurs ${\rm v}_a(T)$ times in the sequence $T$. By $|T|$ we denote the length of the sequence, i.e., $$|T|=\sum\limits_{a\in \mathcal{S}}{\rm v}_a(T)=\ell.$$
By $\varepsilon$ we denote the
{\sl empty sequence} in $\mathcal{S}$ with $|\varepsilon|=0$.
By $\cdot$ we always denote the concatenation operation of sequences. Let $T_1,T_2\in \mathcal{F}(\mathcal{S})$ be two sequences. We call $T_2$
a subsequence of $T_1$ if $${\rm v}_a(T_2)\leq {\rm v}_a(T_1)\ \ \mbox{for each element}\ \ a\in \mathcal{S},$$ denoted by $$T_2\mid T_1,$$ moreover, we write $$T_3=T_1  T_2^{[-1]}$$ to mean the unique subsequence of $T_1$ with $T_2\cdot T_3=T_1$.  We call $T_2$ a {\sl proper} subsequence of $T_1$ provide that $T_2\mid T_1$ and $T_2\neq T_1$. In particular, the  empty sequence  $\varepsilon$ is the proper subsequence of every nonempty sequence. Let $$\sigma(T)=a_1+\cdots +a_{\ell}$$ be the sum of all terms from $T$.
We call $T$ a {\bf zero-sum} sequence, provide that $\mathcal{S}$ is a monoid and $\sigma(T)=0_{\mathcal{S}}$.
In particular,
if $\mathcal{S}$ is a monoid,  we allow $T=\varepsilon$ to be empty and adopt the convention
that $$\sigma(\varepsilon)=0_\mathcal{S}.$$ If the sequence $T$ contains no nonempty zero-sum subsequence, we call $T$ {\bf zero-sum free}. The sequence $T$ is called an {\bf additively reducible} (reducible) sequence if $T$ contains a proper subsequence $T'$ with $\sigma(T')=\sigma(T)$, and is called an {\bf additively irreducible} (irreducible) sequence if otherwise.

The additive properties of sequences in abelian groups (mainly in finite abelian groups) have been widely studied, since  H. Davenport \cite{Davenport} in 1966 and  K. Rogers \cite{rog1} in 1963 independently proposed one combinatorial invariant, denoted ${\rm D}(G)$, for any finite abelian group $G$, which is defined as the smallest $\ell\in \mathbb{N}$ such that
every sequence $T\in \mathcal{F}(G)$ of length $|T|$ at least $\ell$ contains a nonempty zero-sum subsequence.
Although Davenport proposed this invariant to study the algebraic number theory since he observed this invariant ${\rm D}(G)$ is the maximal number of prime ideals which can appear in
the factorization of an irreducible number in a number field, which class group is $G$, the researches on Davenport constant have influenced other fields in Number Theory and in Combinatorics. For example, the Davenport constant has been applied by Alford,  Granville and Pomeranceto \cite{Cnumber} to prove that there are infinitely many Carmichael numbers and by Alon \cite{AlonJctb} to prove the existence of regular subgraphs. What is more important, a lot of researches were stimulated by the Davenport constant together with another famous theorem obtained by P. Erd\H{o}s, A. Ginzburg and A. Ziv \cite{EGZ} in 1961 on additive properties of sequences in groups, which have been developed into a branch, called Zero-sum Theory (see \cite{GaoGeroldingersurvey} for a survey), in Additive Group Theory. In the past five decades, many researchers made efforts to find the values of Davenport constant for finite abelian groups. Unfortunately, the precise values of this constant was known for only a small number of families of finite abelian groups  by far (see \cite{GLP12} for the recent progress).

Note that in any finite abelian group $G$, the sequence $T\in \mathcal{F}(G)$ contains a nonempty zero-sum subsequence if and only if $T$ is reducible. Hence, we have
\begin{equation}\label{equation D(G)=max(T)}
{\rm D}(G)=\max\limits_{T}\{\ |T|\ \}+1,
\end{equation}
 where $T$ takes over all irreducible sequences in the finite abelian group $G$. Accordingly, M. Ska{\l}ba  \cite{SkalbaGraz,Skalba,SkalbaEuropean} formulated an invariant associated with irreducible sequences in any finite abelian group $G$. For any element $g\in G^{\bullet}$,  let ${\rm D}_g(G)$ be the largest length of irreducible sequences $T$ with $\sigma(T)=g$, which is called the relative Davenport constant of $G$ with respect to the element $g\in G^{\bullet}$.
 Hence, \eqref{equation D(G)=max(T)} is equivalent to
 \begin{equation}\label{equation D(G)=max(Dg(G))}
{\rm D}(G)=\max\limits_{g\in G^{\bullet}}\{{\rm D}_g(G)\}+1.
\end{equation}
Ska{\l}ba \cite{Skalba, SkalbaEuropean} determined the precise values ${\rm D}_g(G)$ for any $g\in G^{\bullet}$ in case that $G$ is a finite cyclic group or a finite abelian group of rank two. More importantly, he found the following general bounds:

\medskip

\noindent \textbf{Theorem A.} (\cite{Skalba}) \ {\sl If $G$ is a finite abelian group and $g\in G^{\bullet}$, then
$$\frac{1}{2}{\rm D}(G)\leq {\rm D}_g(G)\leq {\rm D}(G)-1.$$ }

With respect to the classical Davenport constant and the relative Davenport constant defined by Ska{\l}ba, the author of this manuscript
together with W.D. Gao,
formulated the definitions of the Davenport constant and the relative Davenport constant for commutative semigroups, and made some closed related researches on additive properties of sequences in semigroups (see \cite{AdhikariGaoWang14,wangDavenportII,wang,wanggao}).

\medskip

\noindent \textbf{Definition B.}  (see \cite{wangDavenportII,wanggao}) \ {\sl  Define the Davenport constant of the commutative semigroup $\mathcal{S}$, denoted ${\rm D}(\mathcal{S})$, to be the smallest $\ell\in \mathbb{N}\cup\{\infty\}$ such that every sequence $T\in \mathcal{F}(\mathcal{S})$ of length at least $\ell$ is reducible. For any element $a\in \mathcal{S}^{\bullet}$, we define the relative Davenport constant of $\mathcal{S}$ with respect to $a$, denoted ${\rm D}_a(\mathcal{S})$, to be the largest $\ell\in \mathbb{N}\cup \{\infty\}$ such that there exists an irreducible sequence $T\in \mathcal{F}(\mathcal{S})$ with $|T|=\ell$ and $\sigma(T)=a$.}

Since any nonempty sequence $T\in \mathcal{F}(\mathcal{S})$ with $\sigma(T)=0_{\mathcal{S}}$ is reducible, we shall admit normally that  $${\rm D}_a(\mathcal{S})=0 \ \ \mbox{ if } \ \ a=0_{\mathcal{S}}.$$

In fact, due to the research of Factorization Theory in Algebra, A. Geroldinger and F. Halter-Koch \cite{GH} in 2006 have formulated another closely related definition, denoted ${\rm d}(\mathcal{S})$, for any commutative semigroup  $\mathcal{S}$, which is called the small Davenport constant.

\medskip

\noindent \textbf{Definition C.} (Definition 2.8.12 in \cite{GH}) \ {\sl For a commutative semigroup $\mathcal{S}$, let ${\rm d}(\mathcal{S})$ be the smallest $\ell \in
\mathbb{N}_0\cup \{\infty\}$ with the following property:

For any $m\in \mathbb{N}$ and $a_1, \ldots,a_m\in \mathcal{S}$ there
exists a subset $I\subset [1,m]$ such that $|I|\leq \ell$ and
$$
\sum_{i=1}^m a_i=\sum_{i \in I}a_i.
$$}

The following connection between the (large) Davenport constant ${\rm D}(\mathcal{S})$ and the small Davenport constant ${\rm d}(\mathcal{S})$ was found when $\mathcal{S}$ is a finite commutative semigroup.

\medskip

\noindent \textbf{Proposition D.}  \  {\sl Let $\mathcal{S}$ be a finite  commutative semigroup. Then,
\begin{enumerate}
\item  $\mathsf d(\mathcal{S})<
\infty.$ (see Proposition 2.8.13 in \cite{GH})
\item  $\mathsf {\rm D}(\mathcal{S})=\mathsf d(\mathcal{S})+1.$ (see Proposition 1.2 in \cite{AdhikariGaoWang14})
\end{enumerate}
}

Concerned with some specific kind of semigroups, the author of this manuscript together with W.D. Gao in 2008 obtained the following result.

\medskip

\noindent \textbf{Theorem E.} \cite{wanggao} \ {\sl Let
$R = \mathbb{Z}\diagup{n_1\mathbb{Z}}\oplus\cdots \oplus \mathbb{Z}\diagup{n_r\mathbb{Z}}$ be the direct sum of $r$ residual class rings modulo $n_1,\ldots,n_r$ respectively. Let ${\bf a}=(\overline{a_1},\ldots,\overline{a_r})$ be an element of $\mathcal{S}_R$, where $\overline{a_i}=a_i+n_i\mathbb{Z}\in \mathbb{Z}\diagup{n_i}\mathbb{Z}$ for $i\in [1,r]$. Let $R'=\mathbb{Z}\diagup{\frac{n_1}{t_1}\mathbb{Z}}\oplus\cdots \oplus \mathbb{Z}\diagup{\frac{n_r}{t_r}\mathbb{Z}}$, where $t_i=\gcd(a_i,n_i)$ for $i\in [1,r]$.  Then
$$\begin{array}{llll}{\rm D}_{\bf a}(\mathcal{S}_R)=\left\{\begin{array}{llll}
               {\rm D}_{\bf a}({\rm U}(R)),  & \mbox{if \ \ } a\in {\rm U}(R);\\
               \sum\limits_{i=1}^r \Omega(t_i)+{\rm D}({\rm U}(R'))-1 ,  & \mbox{if otherwise},\\
              \end{array}
              \right.
\end{array}$$
where $\Omega(t_i)$ denotes the number of prime factors (repeat prime factors are also calculated) of the integer $t_i$.}

In this manuscript, we shall make a study of the largest length of irreducible sequences representing any given element $a\in \mathcal{S}^{\bullet}$ in the setting of general commutative semigroup $\mathcal{S}$, and try to
bound ${\rm D}_a(\mathcal{S})$. Although in any abelian group $G$, as stated as in Theorem A, ${\rm D}_a(G)$ and ${\rm D}(G)$ we strongly related and bounded each other,
in the setting of commutative semigroups $\mathcal{S}$, one interesting and seemingly very natural thing is that ${\rm D}_a(\mathcal{S})$ and ${\rm D}(\mathcal{S})$ do not have this mutual restraint any more. For example, take $\mathcal{S}$ to be the additive semigroup $\mathbb{N}$. Then ${\rm D}_a(\mathcal{S})$ is finite for every $a\in \mathcal{S}$, but ${\rm D}(\mathcal{S})$ is infinite.
 Nevertheless, in this paper we still obtained that ${\rm D}_a(\mathcal{S})$ was bounded by some its `local structure' in the semigroup $\mathcal{S}$.

 While, we first give the following conclusion which can be derived directly from the definitions of ${\rm D}_a(\mathcal{S})$ and ${\rm D}(\mathcal{S})$.

\medskip

\begin{prop}\label{proposition Da(S) and D(S)} \ Let $\mathcal{S}$ be a commutative semigroup. Then,

\noindent (i). \  If $\mathcal{S}$ is a monoid and
$a\in {\rm U}(\mathcal{S})$, then
${\rm D}_a(\mathcal{S})={\rm D}_a({\rm U}(\mathcal{S}))$;

\noindent (ii). \ ${\rm D}(\mathcal{S})$ is finite if and only if ${\rm D}_a(\mathcal{S})$ is bounded for all $a\in \mathcal{S}$, i.e., there exists a given large integer $\mathcal{M}$ such that ${\rm D}_a(\mathcal{S})\leq \mathcal{M}$ for all $a\in \mathcal{S}$.
In particular, if ${\rm D}(\mathcal{S})$ is finite then $${\rm D}(\mathcal{S})=\max\limits_{a\in \mathcal{S}}\{{\rm D}_a(\mathcal{S})\}+1.$$
 \end{prop}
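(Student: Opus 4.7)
Both parts of the proposition follow by unpacking the definitions, so I would treat them in sequence.

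For (i), the key observation is that every term of a sequence summing to a unit must itself be a unit. Concretely, if $T\in \mathcal{F}(\mathcal{S})$ satisfies $\sigma(T)=a\in {\rm U}(\mathcal{S})$ and $a'\in \mathcal{S}$ is an additive inverse of $a$, then for any term $b$ of $T$, writing $T=b\cdot T_b$, the element $\sigma(T_b)+a'$ is an additive inverse of $b$, so $b\in {\rm U}(\mathcal{S})$. Thus every irreducible sequence in $\mathcal{F}(\mathcal{S})$ with sum $a$ already lies in $\mathcal{F}({\rm U}(\mathcal{S}))$. Since proper subsequences and the map $\sigma$ depend only on the underlying multiset together with the ambient addition, and this addition agrees on ${\rm U}(\mathcal{S})$ and $\mathcal{S}$, a sequence in $\mathcal{F}({\rm U}(\mathcal{S}))$ is irreducible in ${\rm U}(\mathcal{S})$ if and only if it is irreducible in $\mathcal{S}$. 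Maximising over all irreducible sequences with sum $a$ then yields ${\rm D}_a(\mathcal{S})={\rm D}_a({\rm U}(\mathcal{S}))$.

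For the forward direction of (ii), if ${\rm D}(\mathcal{S})=d<\infty$ then every sequence of length at least $d$ is reducible, so every irreducible sequence has length at most $d-1$; hence ${\rm D}_a(\mathcal{S})\leq d-1$ uniformly in $a$. For the converse, assuming ${\rm D}_a(\mathcal{S})\leq \mathcal{M}$ for every $a\in \mathcal{S}$, I would show by contradiction that any sequence $T$ of length at least $\mathcal{M}+1$ is reducible: if $T$ were irreducible and we set $a=\sigma(T)$, then either $a\in \mathcal{S}^{\bullet}$, giving $|T|\leq {\rm D}_a(\mathcal{S})\leq \mathcal{M}$ and contradicting $|T|\geq \mathcal{M}+1$, or $a=0_{\mathcal{S}}$, in which case the empty sequence $\varepsilon$ is a proper subsequence of $T$ with the same sum, again contradicting irreducibility. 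Therefore ${\rm D}(\mathcal{S})\leq \mathcal{M}+1<\infty$.

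For the equality ${\rm D}(\mathcal{S})=\max_{a\in \mathcal{S}}\{{\rm D}_a(\mathcal{S})\}+1$ in the finite case, the forward bound above already gives $\max_a {\rm D}_a(\mathcal{S})+1\leq {\rm D}(\mathcal{S})$, while the minimality in the definition of ${\rm D}(\mathcal{S})$ produces an irreducible sequence $T_0$ of length exactly ${\rm D}(\mathcal{S})-1$, whose sum $a_0=\sigma(T_0)$ then realises ${\rm D}_{a_0}(\mathcal{S})\geq {\rm D}(\mathcal{S})-1$. The argument has no serious obstacle; the only subtlety worth flagging is the need to dispose of the case $\sigma(T)=0_{\mathcal{S}}$ separately in the converse of (ii), which is precisely the reason for the convention ${\rm D}_{0_{\mathcal{S}}}(\mathcal{S})=0$ adopted immediately before the statement.
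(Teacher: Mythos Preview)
Your argument is correct and is precisely the kind of direct verification from the definitions that the paper has in mind: the paper does not spell out a proof at all, stating only that the proposition ``can be derived directly from the definitions of ${\rm D}_a(\mathcal{S})$ and ${\rm D}(\mathcal{S})$,'' and your write-up supplies exactly those details.
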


Another thing worth mentioning is that the research on additive properties of irreducible sequences seems to have a close connection with the Word Problem for groups and semigroups. Given a semigroup (or group)  $\mathcal{S}=<X\mid \mathcal{R}>$ generated by the set $X$ subject to the defining relations $\mathcal{R}$, and given any two words (finite, or infinite sequences), say ${\bf u}=u_1\cdot u_2\cdot \ldots\cdot u_{\ell}$ and ${\bf v}=v_1\cdot v_2\cdot \ldots\cdot v_{t}$, with $u_1,\ldots,u_{\ell}, v_1,\ldots,v_t\in X$, decide whether ${\bf u}\equiv {\bf v} \pmod {\mathcal{R}}$, i.e., whether ${\bf u}$ and ${\bf v}$ represent the same element of $\mathcal{S}$. Dehn \cite{Dehn} in
1911 investigated some special cases of the word problem
for groups. The question Dehn proposed was to find a uniform test or mechanical
procedure (i.e. an algorithm) which enables us to decide whether ${\bf u}$ and ${\bf v}$ represent the same element of $\mathcal{S}$. If there is such an algorithm, the word problem is called solvable. In 1947, Post \cite{Post} proved that the word problem for
semigroups is unsolvable. Later Novikov
\cite{Novikov} in 1955, and Boone \cite{Boone} in 1959,
showed that the word problem for groups is also unsolvable. To learn more on the word problem in commutative semigroups, one is refereed to \cite{wordAdvance,IsrealWord}. Though the computation complexity is the main consideration in the field of Word Problem, determining the largest length of irreducible words (which does not represent the same element as any its proper sub-word does) represent some given element in the semigroup $\mathcal{S}$ is still an interesting problem.

In this manuscript, the largest length of irreducible sequences representing any given element $a\in \mathcal{S}^{\bullet}$, i.e., the value of ${\rm D}_a(\mathcal{S})$, is investigated in commutative semigroups. For any element $a$ of a commutative semigroup $\mathcal{S}$, in case that any ascending chain of principal ideals starting from the ideal $(a)$ terminates in $\mathcal{S}$, we give the sufficient and necessary conditions to ensure that ${\rm D}_a(\mathcal{S})$ is finite, and moreover, we give the sharp lower and upper bounds of ${\rm D}_a(\mathcal{S})$ when ${\rm D}_a(\mathcal{S})$ is finite. We also applied the obtained result to commutative unitary rings, in particular, we determined the precise value of ${\rm D}_a(\mathcal{S})$ when $\mathcal{S}$ is the multiplicative semigroup of any finite commutative principal ideal unitary ring.

Before giving our main theorems, some necessary notations and terminologies in Semigroups  will be worth reviewing for readers who do not specialize in Semigroup Theory.

For any subset $A\subseteq \mathcal{S}$, let $${\rm St}(A)=\{c\in \mathcal{S}: c+a\in A \ \mbox{for every} \ a\in A\}$$ be the stabilizer of the set $A$ in the semigroup $\mathcal{S}$, which is a subsemigroup of $\mathcal{S}$.
For any element $a\in \mathcal{S}$, let
$$(a)=\{a+c: c\in \mathcal{S}^{0}\}$$ denotes the {\sl principal ideal} generated by the element $a\in \mathcal{S}$.
The {\bf Green's preorder} on the semigroup $\mathcal{S}$, denoted $\leqq_{\mathcal{H}}$, is defined by
$$a \leqq_{\mathcal{H}} b\Leftrightarrow a=b \ \ \mbox{or}\ \ a=b+c$$ for some $c\in \mathcal{S}$, equivalently, $$(a)\subseteq (b).$$ Green's congruence, denoted
$\mathcal{H}$, is a basic relation introduced by Green for semigroups which is defined by:
$$a \ \mathcal{H} \ b \Leftrightarrow a \ \leqq_{\mathcal{H}} \ b \mbox{ and } b \ \leqq_{\mathcal{H}} \ a \Leftrightarrow (a)=(b).$$
For any element $a$ of $\mathcal{S}$,  let $H_a$ be the congruence class by $\mathcal{H}$ containing $a$.  We write $a\prec_{\mathcal{H}} b$ to mean that $a \leqq_{\mathcal{H}} b$ but $H_a\neq H_b$.

Let $\Lambda$ be any partially ordered set. We say $\Lambda$ has the ascending chain condition (a.c.c.), provided that any ascending chain $\lambda_1<\lambda_2<\cdots$ terminates. We say a commutative semigroup $\mathcal{S}$ (a commutative ring $R$) satisfies a.c.c. for principal ideals, or for ideals, or for congruences provided that the above corresponding partially ordered set $\Lambda$ has the a.c.c., where $\Lambda$ denotes the partially ordered set consisting of  principal ideals, or of ideals, or of congruences, in $\mathcal{S}$ (in $R$) formed by inclusions, respectively. A  commutative semigroup $\mathcal{S}$ is said to be
{\bf Noetherian} provided that the semigroup $\mathcal{S}$ satisfies the a.c.c. for congruences.

Let $a$ be an element of any commutative semigroup $\mathcal{S}$, or an element of any commutative unitary ring $R$. We define $\Psi(a)$ to be the largest length $\ell\in \mathbb{N}_0\cup \{\infty\}$ of strictly ascending principal ideals chain of $\mathcal{S}^{0}$ (of $R$ accordingly) starting from $(a)$, i.e., the largest $\ell\in \mathbb{N}_0\cup \{\infty\}$ such that there exist $\ell$ elements $a_1,a_2,\ldots,a_{\ell}\in \mathcal{S}^{0}$ ($a_1,a_2,\ldots,a_{\ell}\in R$ respectively) with $$(a) \subsetneq  (a_1) \subsetneq \cdots \subsetneq (a_{\ell}).$$ When $a\in \mathcal{S}$, $\Psi(a)$ can be equivalently defined as
the largest length of strictly ascending Green's preorder chain starting from $a$:
$$a \prec_{\mathcal{H}} a_1\prec_{\mathcal{H}}\cdots \prec_{\mathcal{H}} a_{\ell}.$$

Since the commutative ring $R$ is unitary, we see that all the  principal ideals of the ring $R$ are consistent with all the principal ideals of the semigroup $\mathcal{S}_R$, and therefore, the definition $\Psi(a)$ is consistent no matter whether we regard $a$ as the element of the ring $R$ or as the element of the multiplicative semigroup $\mathcal{S}_R$.

Then we introduce the definition of {\bf Sch${\rm \ddot{u}}$tzenberger group} which palys a key role in giving the bounds for ${\rm D}_a(\mathcal{S})$.

Each $c\in {\rm St}(H_a)$ induces a mapping $$\gamma_c:H_a\rightarrow H_a$$ defined by
$$\gamma_c: x\mapsto c+x$$ for every $x\in H_a$, which we write as a operator $$\gamma_c \circ x=c+x.$$ Let $$\Gamma(H_a)=\{\gamma_c: c\in {\rm St}(H_a)\}.$$
It is well known that $\Gamma(H_a)$ is an abelian group with the operation
\begin{equation}\label{equation operation}
\gamma_c+\gamma_d=\gamma_{c+d},
\end{equation} which is discover by M.P. Sch${\rm \ddot{u}}$tzenberger  in 1957 (see  Section 3 of Chapter II in \cite{Grillet Semigroup}), and is called the Sch${\rm \ddot{u}}$tzenberger group of $H_a$. Naturally, there exists a homomorphism $\rho_a$ of ${\rm St}(H_a)$ onto $\Gamma(H_a)$, defined by $$\rho_a: c\mapsto \gamma_c$$ for every $c\in {\rm St}(H_a)$.

Now we are in a position to put out our main results of this manuscript, which are Theorem \ref{Theorem Main theorem abstract} and Theorem \ref{Theorem rings}.

\bigskip

\begin{theorem}\label{Theorem Main theorem abstract} \ Let $\mathcal{S}$ be a commutative semigroup. Let $a$ be an element of $\mathcal{S}^{\bullet}$ with $\Psi(a)$ being finite. If $|H_a|$ is infinite then ${\rm D}_a(\mathcal{S})$ is infinite, and if $|H_a|$ is finite then ${\rm D}_a(\mathcal{S})$ is finite and
$$\epsilon \ {\rm D}(\Gamma(H_a))\leq {\rm D}_a(\mathcal{S})\leq \Psi(a)+{\rm D}(\Gamma(H_a))-1$$
where
$$\begin{array}{llll}\epsilon=\left\{\begin{array}{llll}
               \frac{1}{2},  & \mbox{if \ \ } {(a+a) \ \mathcal{H} \ a;}\\
              1,  & \mbox{if \ \ } otherwise,\
              \end{array}
              \right.
\end{array}$$  and both the lower and upper bounds are sharp.
\end{theorem}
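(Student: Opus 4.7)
My plan is to analyse an irreducible $T = a_1 \cdots a_\ell$ with $\sigma(T) = a$ through its tail sums $\tau_k := a_k + a_{k+1} + \cdots + a_\ell$. Since $\tau_k = a_k + \tau_{k+1}$, one has $(\tau_1) \subseteq (\tau_2) \subseteq \cdots \subseteq (\tau_\ell)$, so the tails form an ascending chain of principal ideals starting at $\tau_1 = a$ and hence visit at most $\Psi(a)+1$ distinct $\mathcal{H}$-classes, the bottom one being $H_a$. The whole argument is organised around the transition index $k_0 := \max\{k : \tau_k \in H_a\}$.

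The first half of the upper bound comes from $\Gamma(H_a)$. For $k \in [1, k_0-1]$, the condition $\tau_k, \tau_{k+1} \in H_a$ forces $a_k \in {\rm St}(H_a)$, so $(\gamma_{a_1},\ldots,\gamma_{a_{k_0-1}})$ is a well-defined sequence in $\Gamma(H_a)$. The key reduction: if a nonempty $J \subseteq [1, k_0-1]$ is zero-sum in $\Gamma(H_a)$ with $\beta := \sum_{k \in J} a_k$, then $\sigma\bigl(T \cdot (\prod_{k \in J} a_k)^{[-1]}\bigr)$ lies in ${\rm St}(H_a) + H_a \subseteq H_a$, and the identity $\gamma_\beta = 0$ acting on this $H_a$-element yields $\sigma\bigl(T \cdot (\prod_{k \in J} a_k)^{[-1]}\bigr) = a$, contradicting irreducibility. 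Hence the $\gamma$-sequence is zero-sum-free, giving $k_0 \le {\rm D}(\Gamma(H_a))$.

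The second half bounds the suffix $T'' := a_{k_0+1} \cdots a_\ell$, which is irreducible with $\sigma(T'') = \tau_{k_0+1}$; since $(\tau_{k_0+1}) \supsetneq (a)$, we have $\Psi(\tau_{k_0+1}) \le \Psi(a)-1$ and induction on $\Psi$ is available. The main obstacle is that the naive induction controls $|T''|$ in terms of $\Gamma(H_{\tau_{k_0+1}})$, which is not the group appearing in the theorem. To close the gap, I would use the inclusion ${\rm St}(H_b) \subseteq {\rm St}(H_a)$ whenever $(a) \subseteq (b)$ (proved directly from Green's preorder) combined with the translation $\phi : h \mapsto a_{k_0} + h$ carrying $H_{\tau_{k_0+1}}$ into $H_a$ and intertwining the two stabiliser actions; these tools let one transport zero-sum-free $\Gamma(H_{\tau_{k_0+1}})$-sequences up to $\Gamma(H_a)$, producing a joint budget in which the saturated case $k_0 = {\rm D}(\Gamma(H_a))$ combined with $|T''| \ge 1$ forces $|T''| \le \Psi(a)-1$, while $k_0 \le {\rm D}(\Gamma(H_a))-1$ allows $|T''| \le \Psi(a)$ by induction. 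The base case $\Psi(a) = 0$ is handled separately: every tail then lies in $H_a$, and since an idempotent non-identity $a$ would force $\Psi(a) \ge 1$, one has $\gamma_a \ne 0$, so the full $\gamma$-sequence of length $\ell$ is zero-sum-free in $\Gamma(H_a)$, giving $\ell \le {\rm D}(\Gamma(H_a))-1$.

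For sharpness of the upper bound I would combine a maximal zero-sum-free sequence in $\Gamma(H_a)$, lifted to ${\rm St}(H_a)$, with elements realising a maximal ascending principal ideal chain starting at $(a)$, producing an irreducible $T$ of length $\Psi(a)+{\rm D}(\Gamma(H_a))-1$. For the lower bound $\epsilon \cdot {\rm D}(\Gamma(H_a)) \le {\rm D}_a(\mathcal{S})$, I adapt Ska{\l}ba's proof of Theorem A: a maximal zero-sum-free $\Gamma(H_a)$-sequence lifts to ${\rm St}(H_a)$-elements whose sum lies in $H_a$ and is translated to $a$ by one further element, realising essentially the full Davenport length; the halving $\epsilon = 1/2$ in the case $(a+a) \, \mathcal{H} \, a$ is the familiar parity obstruction from Ska{\l}ba's argument, since $a$ then stabilises $H_a$ and can be doubled. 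Finally, since $\Gamma(H_a)$ acts freely and transitively on $H_a$ we have $|\Gamma(H_a)| = |H_a|$; so $|H_a| = \infty$ gives ${\rm D}(\Gamma(H_a)) = \infty$, and lifting arbitrarily long zero-sum-free $\Gamma(H_a)$-sequences to irreducible $\mathcal{S}$-sequences with sum $a$ shows ${\rm D}_a(\mathcal{S}) = \infty$.
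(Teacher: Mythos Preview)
Your lower-bound constructions and the infinite-$|H_a|$ case match the paper's. The gap is in the upper bound: the induction on $\Psi(a)$ does not close.

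Splitting at $k_0$ and applying the induction hypothesis to the suffix $T''$ gives $|T''|\le \Psi(\tau_{k_0+1})+{\rm D}(\Gamma(H_{\tau_{k_0+1}}))-1$, a bound in the \emph{wrong} Sch\"utzenberger group. The inclusion ${\rm St}(H_b)\subseteq{\rm St}(H_a)$ you cite yields only a homomorphism $\Gamma(H_b)\to\Gamma(H_a)$ that is in general neither injective nor surjective, so it neither bounds ${\rm D}(\Gamma(H_b))$ by ${\rm D}(\Gamma(H_a))$ nor transports zero-sum-freeness; hence the two assertions ``$|T''|\le\Psi(a)$ by induction'' and ``$|T''|\le\Psi(a)-1$ in the saturated case'' are both unproved. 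For a concrete failure of the bookkeeping, take $\mathcal S=\mathcal S_R$ with $R=\mathbb Z/8\mathbb Z$ and $a=4$: here $\Gamma(H_4)$ is trivial so necessarily $k_0=1$, while $\tau_2$ can lie in $H_2$ with ${\rm D}(\Gamma(H_2))=2$ or in ${\rm U}(R)\cong\mathbb Z_2^2$ with Davenport constant $3$; your recursive estimate then yields only $\ell\le 3$, whereas the theorem asserts $\ell\le\Psi(4)+1-1=2$.

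The paper sidesteps all of this by dropping both the ordering and the induction. It takes $T_1$ to be a \emph{shortest} (not necessarily contiguous) subsequence of $T$ with $\sigma(T_1)\,\mathcal H\,a$; minimality forces the successive partial sums of $T_1$ to give a strictly ascending chain of principal ideals ending at $(a)$, so $|T_1|\le\Psi(a)$ in one stroke. Every remaining term $c\mid TT_1^{[-1]}$ then satisfies $\sigma(T)\leqq_{\mathcal H}\sigma(T_1)+c\leqq_{\mathcal H}\sigma(T_1)\,\mathcal H\,\sigma(T)$, whence $c\in{\rm St}(H_a)$; the $\gamma$-image of $TT_1^{[-1]}$ is a sequence of length $\ge{\rm D}(\Gamma(H_a))$ in $\Gamma(H_a)$ and therefore contains a nonempty zero-sum piece, whose preimage can be deleted from $T$. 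No comparison of Sch\"utzenberger groups at different levels is ever needed.

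On sharpness: your upper-bound sketch (concatenate generators of a maximal chain with lifted stabiliser elements) does not automatically produce an irreducible sequence in a general semigroup, and the paper does not claim it does---it establishes sharpness of the upper bound only through Theorem~\ref{Theorem rings} for finite commutative principal ideal rings, where the factorisation structure of Lemma~\ref{Lemma structure of principal} makes irreducibility verifiable. Sharpness of the two lower bounds is witnessed by explicit finitely presented semigroups $\mathcal S_1,\mathcal S_2$ constructed for that purpose.
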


\bigskip

\begin{theorem}\label{Theorem rings} \ Let $R$ be a  commutative unitary ring. Let $a$ be an element of $\mathcal{S}_R^{\bullet}$ with $\Psi(a)$ being finite.
Then  $$\Gamma(H_a)\cong {\rm U}(R_a),$$ where $R_a=R\diagup {\rm Ann}(a)$ be the quotient ring of $R$ modulo the annihilator of $a$. If ${\rm U}(R_a)$ is infinite then ${\rm D}_a(\mathcal{S}_R)$ is infinite, and if ${\rm U}(R_a)$ is finite then ${\rm D}_a(\mathcal{S}_R)$ is finite and
$$\epsilon \ {\rm D}({\rm U}(R_a))\leq {\rm D}_a(\mathcal{S}_R)\leq \Psi(a)+{\rm D}({\rm U}(R_a))-1,$$
where $\epsilon$ is the same as in Theorem \ref{Theorem Main theorem abstract}.  In particular,
if $R$ is a finite commutative principal ideal unitary ring and $a\notin {\rm U}(R)$, then the above equality  $${\rm D}_a(\mathcal{S}_R)= \Psi(a)+{\rm D}({\rm U}(R_a))-1$$ holds.
\end{theorem}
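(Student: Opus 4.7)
My plan to prove Theorem \ref{Theorem rings} has three parts: establish the isomorphism $\Gamma(H_a) \cong {\rm U}(R_a)$, invoke Theorem \ref{Theorem Main theorem abstract} to obtain the finiteness criterion and bounds, and then in the finite PIR case construct an explicit irreducible sequence of length $\Psi(a) + {\rm D}({\rm U}(R_a)) - 1$ with product $a$.

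For the isomorphism, I would use that in the multiplicative semigroup of a commutative unitary ring $H_a = \{b \in R : (b) = (a)\}$, and multiplication by $a$ descends to an injection $R_a \to R$, $\bar{r} \mapsto ra$. Under this injection ${\rm U}(R_a)$ maps bijectively onto $H_a$; similarly $c \in {\rm St}(H_a)$ iff $\bar{c} \in {\rm U}(R_a)$, with $\gamma_c = \gamma_{c'}$ exactly when $c \equiv c' \pmod{{\rm Ann}(a)}$. This yields $\Gamma(H_a) \cong {\rm U}(R_a)$ together with the equivalence $|H_a| < \infty \iff |{\rm U}(R_a)| < \infty$; the bounds then follow immediately from Theorem \ref{Theorem Main theorem abstract}.

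For the equality in the finite PIR case with $a \notin {\rm U}(R)$, I would invoke the structure theorem to write $R = R_1 \times \cdots \times R_k$ with each $R_i$ a finite local PIR having maximal ideal $(\pi_i)$ and $\pi_i^{n_i} = 0$, and decompose $a = (u_1 \pi_1^{e_1}, \ldots, u_k \pi_k^{e_k})$ with $u_i \in {\rm U}(R_i)$ (taking $e_i = n_i$, $u_i = 1$ where $a_i = 0$), so that $\Psi(a) = \sum_i e_i$ and $R_a \cong \prod_i R_i/(\pi_i^{n_i - e_i})$. Since $a \notin {\rm U}(R)$, some $e_j \ge 1$; without loss of generality $e_1 \ge 1$. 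Let $\hat{\pi}_i \in R$ be $\pi_i$ in coordinate $i$ and $1$ elsewhere, and $\hat{u} = (u_1, \ldots, u_k) \in {\rm U}(R)$, so that $a = \hat{u}\prod_i \hat{\pi}_i^{e_i}$. Picking a zero-sum-free sequence $\bar{w}_1 \cdots \bar{w}_{D-1}$ of maximum length in ${\rm U}(R_a)$ (where $D = {\rm D}({\rm U}(R_a))$), lifting each $\bar{w}_j$ to $\tilde{w}_j \in {\rm U}(R)$ via coordinate-wise local lifting along ${\rm U}(R) \twoheadrightarrow {\rm U}(R_a)$, and setting $\tilde{g} = \prod_j \tilde{w}_j$ and $c = \hat{u}\tilde{g}^{-1} \in {\rm U}(R)$, I form the candidate sequence
\[
T = (c\hat{\pi}_1) \cdot \hat{\pi}_1^{[e_1 - 1]} \cdot \prod_{i \ge 2} \hat{\pi}_i^{[e_i]} \cdot \tilde{w}_1 \cdot \tilde{w}_2 \cdot \ldots \cdot \tilde{w}_{D-1},
\]
which has length $\Psi(a) + D - 1$ and product $a$.

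The main obstacle is verifying that $T$ is irreducible. The plan is a valuation argument: if $T' \mid T$ is proper with product $a$, comparing $\pi_i$-adic valuations in each local factor $R_i$ (reading $a_i = 0$ as ``valuation $\ge n_i$'') shows that, because every $\tilde{w}_j$ and each non-$\pi_i$ coordinate of $c\hat{\pi}_1$ or $\hat{\pi}_j$ contribute units in $R_i$, the valuation budget in each coordinate is saturated only if $T'$ contains every non-unit term of $T$, that is, the entire ``prime part'' $P = (c\hat{\pi}_1) \cdot \hat{\pi}_1^{[e_1-1]} \cdot \prod_{i \ge 2}\hat{\pi}_i^{[e_i]}$. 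Writing $T' = P \cdot U'$ with $U' \subsetneq \tilde{w}_1 \cdot \ldots \cdot \tilde{w}_{D-1}$, the equation ``product of $T'$ equals $a$'' cancels $P$ and the chosen $c$, forcing the units in $\tilde{w}_1 \cdot \ldots \cdot \tilde{w}_{D-1} \setminus U'$ to have product $1_R$; projecting to ${\rm U}(R_a)$ then yields a nonempty subsequence of $\bar{w}_1 \cdots \bar{w}_{D-1}$ with trivial product, contradicting zero-sum-freeness. This proves ${\rm D}_a(\mathcal{S}_R) \ge \Psi(a) + D - 1$, matching the upper bound from Theorem \ref{Theorem Main theorem abstract}.
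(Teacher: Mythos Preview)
Your plan is correct and mirrors the paper's proof closely: the isomorphism $\Gamma(H_a)\cong{\rm U}(R_a)$ is obtained in both by showing $c\in{\rm St}(H_a)\Leftrightarrow\bar c\in{\rm U}(R_a)$ and that $\gamma_c=\gamma_{c'}\Leftrightarrow c-c'\in{\rm Ann}(a)$; the general bounds come from Theorem~\ref{Theorem Main theorem abstract}; and for a finite commutative PIR with $a\notin{\rm U}(R)$ both you and the paper build the extremal sequence by factoring $a$ into ``primes'' times a unit, lifting a maximal zero-sum-free sequence from ${\rm U}(R_a)$ to ${\rm U}(R)$, folding the leftover unit into one prime term, and running a valuation argument for irreducibility. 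The only difference is cosmetic: the paper works intrinsically with the maximal principal ideals $(a_1),\ldots,(a_r)$ of $R$ via its Lemma~\ref{Lemma structure of principal}, whereas you invoke the product decomposition $R\cong\prod R_i$ into local PIRs---these are the same data viewed through the Chinese Remainder Theorem.

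One small imprecision to tighten: in your last paragraph you cannot ``cancel $P$'' in $R$, since $\pi(P)$ is an associate of $a$ and hence a zero-divisor; what you actually obtain from $\pi(P)\cdot\pi(U')=\pi(P)\cdot\tilde g$ is only $\pi(U')\equiv\tilde g\pmod{{\rm Ann}(a)}$, so the product of the omitted $\tilde w_j$'s is $1$ in $R_a$, not in $R$. Your subsequent projection to ${\rm U}(R_a)$ is exactly what makes this work, so the argument is fine once phrased that way---this is also how the paper handles it, via the action of $\rho_a$ on $b\in H_a$.
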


Note that
a commutative semigroup $\mathcal{S}$  satisfies the a.c.c. for principal ideals if and only if $\Psi(a)$ is finite for all element $a\in \mathcal{S}$. Hence, we have the following corollary.

\medskip

\begin{cor} \label{corollary in a.c.c.}\ Let $\mathcal{S}$ be a commutative semigroup satisfying the a.c.c. for principal ideals, and let $a$ be an element of $\mathcal{S}^{\bullet}$. If $|H_a|$ is infinite then ${\rm D}_a(\mathcal{S})$ is infinite, and if $|H_a|$ is finite then ${\rm D}_a(\mathcal{S})$ is finite and
$$\epsilon \ {\rm D}(\Gamma(H_a))\leq {\rm D}_a(\mathcal{S})\leq \Psi(a)+{\rm D}(\Gamma(H_a))-1$$
where $\epsilon$ is the same as in Theorem \ref{Theorem Main theorem abstract}.
\end{cor}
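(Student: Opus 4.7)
The plan is to deduce this corollary directly from Theorem \ref{Theorem Main theorem abstract} by verifying the following observation stated in the paragraph just before the corollary: the semigroup $\mathcal{S}$ satisfies the a.c.c.\ for principal ideals if and only if $\Psi(a)$ is finite for every $a\in \mathcal{S}$. Once this equivalence is in hand, the hypothesis of the corollary implies that $\Psi(a)<\infty$ for the given element $a\in \mathcal{S}^{\bullet}$, which is precisely the hypothesis under which Theorem \ref{Theorem Main theorem abstract} is stated. Applying that theorem then immediately yields both the dichotomy based on $|H_a|$ and, when $|H_a|$ is finite, the double inequality
$$\epsilon \ {\rm D}(\Gamma(H_a))\leq {\rm D}_a(\mathcal{S})\leq \Psi(a)+{\rm D}(\Gamma(H_a))-1$$
with the same value of $\epsilon$.

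The equivalence itself is almost immediate. If every strictly ascending chain of principal ideals in $\mathcal{S}^{0}$ terminates, then in particular every such chain starting at $(a)$ has bounded length, hence $\Psi(a)<\infty$ for each $a$. Conversely, any strictly ascending chain $(a_0)\subsetneq (a_1)\subsetneq \cdots$ may be regarded as a chain beginning with $(a_0)$, so its length is at most $\Psi(a_0)<\infty$, giving the a.c.c. One small point worth mentioning, but not an obstacle, is that $\Psi(a)$ is defined using principal ideals of $\mathcal{S}^{0}$ rather than of $\mathcal{S}$; this causes no discrepancy because, by the definition $(a)=\{a+c:c\in \mathcal{S}^{0}\}$, the principal ideal generated by any $a\in \mathcal{S}^{\bullet}$ is already taken in $\mathcal{S}^{0}$, and adjoining an identity does not alter the collection of principal ideals containing nonzero elements. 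Since everything else is a direct quotation of Theorem \ref{Theorem Main theorem abstract}, there is no genuine difficulty to overcome; the corollary is merely a convenient reformulation under a global hypothesis in place of the pointwise one.
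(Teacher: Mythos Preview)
Your proposal is correct and follows exactly the paper's own route: the paper simply notes (immediately before the corollary) that the a.c.c.\ for principal ideals is equivalent to $\Psi(a)$ being finite for every $a\in\mathcal{S}$, and then invokes Theorem \ref{Theorem Main theorem abstract}. You do precisely this, with a bit more detail on the equivalence than the paper supplies.
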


It is worth remarking that the semigroup $\mathcal{S}$ in Corollary \ref{corollary in a.c.c.} is more general than commutative Noetherian semigroups. In precise, any commutative Noetherian semigroup $\mathcal{S}$ must satisfy the a.c.c. for principal ideals,
while the converse is not necessarily true. It was proved that (see Theorem 5.1 of Chapter I in \cite{SemigroupRing}) any commutative Noetherian semigroup must satisfy the a.c.c. for ideals, and therefore, must satisfy the a.c.c. for principal ideals. Conversely, the free commutative semigroup $F_{\mathbb{N}}$ generated by $\mathbb{N}$ satisfies the a.c.c. for principal ideals but is not Noetherian.

\medskip

In  Corollary \ref{corollary in a.c.c.}, if the element $a$ is an unit, then $\Gamma(H_a)\cong H_a={\rm U}(\mathcal{S})$ (see Lemma \ref{Lemma schutzenburger group}) and $\Psi(a)=0$. Hence, we have the following corollary.

\medskip

\begin{cor} \label{corollary U(S)}\ Let $\mathcal{S}$ be a commutative monoid, and let $a\in {\rm U}(\mathcal{S})\setminus \{0_{\mathcal{S}}\}$. If ${\rm U}(\mathcal{S})$ is infinite then ${\rm D}_a(\mathcal{S})$ is infinite, and if ${\rm U}(\mathcal{S})$ is finite then ${\rm D}_a(\mathcal{S})$ is finite and
$$\frac{1}{2} \ {\rm D}({\rm U}(\mathcal{S}))\leq {\rm D}_a(\mathcal{S})\leq {\rm D}({\rm U}(\mathcal{S}))-1.$$
\end{cor}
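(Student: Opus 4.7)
The plan is to derive Corollary \ref{corollary U(S)} as a direct specialization of Corollary \ref{corollary in a.c.c.}, after verifying three small structural facts about any unit $a\in {\rm U}(\mathcal{S})\setminus\{0_{\mathcal{S}}\}$.

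First, I would establish that $(a)=\mathcal{S}^{0}$ and therefore $\Psi(a)=0$. Since $a$ is a unit, there exists $a'\in \mathcal{S}$ with $a+a'=0_{\mathcal{S}}$, and for any $b\in \mathcal{S}^{0}$ we can write $b=0_{\mathcal{S}}+b=a+(a'+b)$, so $b\in(a)$. Since $\mathcal{S}^{0}$ is the maximum principal ideal, no principal ideal strictly contains $(a)$; hence $\Psi(a)=0$. In particular $\Psi(a)$ is finite, so Corollary \ref{corollary in a.c.c.} applies to $a$ without imposing the a.c.c.\ on the whole semigroup.

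Second, I would pin down $H_a$ and show $\epsilon=\tfrac{1}{2}$. The class $H_a$ consists of all $b$ with $(b)=(a)=\mathcal{S}^{0}$; but $(b)=\mathcal{S}^{0}$ is equivalent to $0_{\mathcal{S}}\in(b)$, i.e., to the existence of $b'\in \mathcal{S}^{0}$ with $b+b'=0_{\mathcal{S}}$, which is exactly $b\in {\rm U}(\mathcal{S})$. Thus $H_a={\rm U}(\mathcal{S})$. Applying the same reasoning to $a+a$ (a unit, since units form a group) gives $(a+a)=\mathcal{S}^{0}=(a)$, whence $(a+a)\ \mathcal{H}\ a$; by the definition in Theorem \ref{Theorem Main theorem abstract} this forces $\epsilon=\tfrac{1}{2}$.

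Third, as indicated in the paragraph immediately preceding the statement, Lemma \ref{Lemma schutzenburger group} supplies the isomorphism $\Gamma(H_a)\cong H_a={\rm U}(\mathcal{S})$. Substituting $\Psi(a)=0$, $\Gamma(H_a)\cong {\rm U}(\mathcal{S})$, and $\epsilon=\tfrac{1}{2}$ into the bounds of Corollary \ref{corollary in a.c.c.} collapses them to
$$\tfrac{1}{2}\,{\rm D}({\rm U}(\mathcal{S}))\leq {\rm D}_a(\mathcal{S})\leq {\rm D}({\rm U}(\mathcal{S}))-1,$$
while the dichotomy $|H_a|=|{\rm U}(\mathcal{S})|$ finite or infinite handles both regimes at once. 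The only obstacle is essentially bookkeeping, namely cleanly packaging the equivalence \emph{units of a monoid} $=$ \emph{the $\mathcal{H}$-class of any unit}, so that the invocation of Lemma \ref{Lemma schutzenburger group} is unambiguous; beyond that, the corollary is a direct substitution.
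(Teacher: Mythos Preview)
Your proposal is correct and follows essentially the same route as the paper: the paper derives the corollary from Corollary~\ref{corollary in a.c.c.} by noting that for a unit $a$ one has $\Gamma(H_a)\cong H_a={\rm U}(\mathcal{S})$ (via Lemma~\ref{Lemma schutzenburger group}) and $\Psi(a)=0$. You supply a bit more detail than the paper does, in particular the explicit verification that $(a+a)\ \mathcal{H}\ a$ so that $\epsilon=\tfrac{1}{2}$, which the paper leaves implicit; one small wording point is that your phrase ``Corollary~\ref{corollary in a.c.c.} applies to $a$ without imposing the a.c.c.\ on the whole semigroup'' would be cleaner if you cited Theorem~\ref{Theorem Main theorem abstract} directly, since it is that theorem (not Corollary~\ref{corollary in a.c.c.}) whose hypothesis is merely that $\Psi(a)$ be finite.
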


\medskip

\begin{remark} \ In Corollary \ref{corollary U(S)}, if the monoid $\mathcal{S}$ is a finite abelian group, i.e., ${\rm U}(\mathcal{S})=\mathcal{S}$, then the conclusion of Corollary \ref{corollary U(S)} reduced to be Theorem A.
\end{remark}

\medskip

\begin{remark} \ It is not hard to see that in Theorem E, the ring $R = \mathbb{Z}\diagup{n_1\mathbb{Z}}\oplus\cdots \oplus \mathbb{Z}\diagup{n_r\mathbb{Z}}$ is a finite principal ideal unitary ring, moreover, $\Psi({\bf a})=\sum\limits_{i=1}^r\Omega(t_i)$ and $R'=R\diagup {\rm Ann}(a)$. That is, Theorem E is a corollary of Theorem \ref{Theorem rings} within the case of finite principal ideal unitary rings.
\end{remark}

\section{Proofs of Theorem 1.2 and Theorem 1.3}

We begin this section with some necessary lemmas.

\begin{lemma} \label{Lemma folklore davenport} (folklore) \ Let $G$ be a finite abelian group and let $T\in \mathcal{F}(G)$ be a minimal zero-sum sequence of length ${\rm D}(G)$. Then $$\Sigma(T)=G,$$ where $\Sigma(T)=\{\sigma(V):V\mid T\mbox{ and }T\neq \varepsilon\}$ is the set consisting of all elements of $G$ that can be represented a sum of some terms from $T$.
\end{lemma}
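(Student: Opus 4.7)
The plan is to proceed by contradiction: suppose there exists some $h\in G\setminus \Sigma(T)$, and derive a contradiction. Since $\sigma(T)=0$ (so that $0\in \Sigma(T)$, taking $V=T$), any such $h$ must be nonzero. The first key observation I would establish is that $\Sigma(T)$ is closed under negation. Indeed, for any proper non-empty subsequence $V\mid T$, the complementary subsequence $T V^{[-1]}$ is itself non-empty (because $V\neq T$) and satisfies $\sigma(TV^{[-1]}) = -\sigma(V)$. The minimality of $T$ forces every proper non-empty subsequence to have nonzero sum, so $\Sigma(T)\setminus\{0\}$ is closed under negation, and of course $-0=0\in\Sigma(T)$.

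Next, I would pick any term $g$ of $T$ and set $U = T g^{[-1]}$, a sequence of length ${\rm D}(G)-1$. By the minimality of $T$, every non-empty subsequence of $U$ is a proper non-empty subsequence of $T$ and hence has nonzero sum, so $U$ is zero-sum free. The main move is to consider the sequence $U\cdot h$, which has length exactly ${\rm D}(G)$. By the very definition of the Davenport constant, $U\cdot h$ must contain a non-empty zero-sum subsequence $W$. This $W$ is forced to contain the appended term $h$: otherwise $W\mid U$ would be a non-empty zero-sum subsequence of $U$, contradicting its zero-sum freeness. Writing $W = h\cdot V$ with $V\mid U$, we obtain $\sigma(V) = -h$; since $h\neq 0$, $V$ must be non-empty. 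Therefore $-h = \sigma(V)\in \Sigma(U)\subseteq \Sigma(T)$, and invoking the negation-closure from the first paragraph yields $h = -(-h) \in \Sigma(T)$, a contradiction.

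There is no genuine obstacle to overcome here; the only delicate point is the negation-closure step, which rests entirely on the minimality of $T$ being used twice: once to guarantee that proper non-empty subsequences have nonzero sums (so their complements are non-empty and contribute subsums to $\Sigma(T)$), and once to ensure that $U = T g^{[-1]}$ is zero-sum free. Everything else is a clean application of the definition of ${\rm D}(G)$ to a sequence engineered to have length exactly ${\rm D}(G)$.
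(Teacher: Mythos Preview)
Your argument is correct. The paper does not actually supply a proof of this lemma; it is stated as ``folklore'' and used without justification, so there is nothing to compare against beyond noting that your proof is the standard one: delete a term to obtain a zero-sum free sequence $U$ of length ${\rm D}(G)-1$, append the target element, and use the definition of ${\rm D}(G)$ to force a zero-sum subsequence that must involve the appended term. One minor remark: your negation-closure detour, while valid, is not strictly necessary---once you have $-h=\sigma(V)$ with $V$ a nonempty proper subsequence of $T$, the complement $TV^{[-1]}$ is itself nonempty and has sum $h$, so $h\in\Sigma(T)$ directly.
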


\medskip

\begin{lemma} \label{Lemma find a sequence} \ Let $G$ be an abelian group, and let $g$ be an element of $G$. Then there exists a sequence $T\in \mathcal{F}(G)$ of sum $\sigma(T)=g$ and of length
$$\begin{array}{llll}|T|\geq \left\{\begin{array}{llll}
               \frac{1}{2} {\rm D}(G),  & \mbox{if \ \ } {|G| \mbox{ is finite};}\\
               \mathcal{M},  & \mbox{if \ \ } otherwise,\
              \end{array}
              \right.
\end{array}$$
such that $T$ contains no nonempty proper zero-sum subsequence, where $\mathcal{M}$ denotes any given positive integer.
\end{lemma}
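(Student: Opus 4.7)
The plan is to prove the bound in the two regimes $|G|<\infty$ and $|G|=\infty$ separately, in each case reducing the existence to the structure of a minimal zero-sum sequence; throughout I use that in an abelian group a sequence $T$ has no nonempty proper zero-sum subsequence if and only if either $\sigma(T)\neq 0_G$ and $T$ is zero-sum free, or $\sigma(T)=0_G$ and $T$ is a minimal zero-sum sequence.

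For the finite case I would start from a minimal zero-sum sequence $U$ of length ${\rm D}(G)$ and apply Lemma \ref{Lemma folklore davenport} to obtain $\Sigma(U)=G$. If $g=0_G$ the sequence $T=U$ works immediately. Otherwise, choose $V\mid U$ with $\sigma(V)=g$; since $\sigma(U)=0_G\neq g$, this $V$ is a nonempty proper subsequence of $U$, and its complement $W=U V^{[-1]}$ is likewise nonempty proper with $\sigma(W)=-g$. Both $V$ and $W$ are proper subsequences of a minimal zero-sum sequence and hence are themselves zero-sum free. From $|V|+|W|={\rm D}(G)$ it follows that at least one of them has length $\ge \frac{1}{2}{\rm D}(G)$; if it is $V$ take $T=V$, otherwise take $T$ to be the sequence obtained from $W$ by replacing each term $w$ by $-w$, which is zero-sum free, has sum $g$, and has the same length as $W$.

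For the infinite case, fix an arbitrary $\mathcal{M}\in\mathbb{N}$ and split according to whether $G$ has an element of infinite order. If $x\in G$ has infinite order, I would pick a multiple $y=nx$ of $x$ so that either $g=0_G$ or $g\notin\langle y\rangle$ (always possible: take $n=1$ if $g\notin\langle x\rangle$ or $g=0_G$, and $n>|k|$ if $g=kx$ with $k\neq 0$); then
\[
T=y^{[\mathcal{M}-1]}\cdot\bigl(g-(\mathcal{M}-1)y\bigr)
\]
has length $\mathcal{M}$ and sum $g$, and its nonempty proper subsequences are $y^{[j]}$ with $1\le j\le \mathcal{M}-1$, of sum $jy\neq 0_G$ since $y$ has infinite order, and $y^{[j]}\cdot(g-(\mathcal{M}-1)y)$ with $0\le j\le \mathcal{M}-2$, of sum $g-(\mathcal{M}-1-j)y$, which cannot be $0_G$: if $g\neq 0_G$ this would force $g\in\langle y\rangle$, and if $g=0_G$ it would force $(\mathcal{M}-1-j)y=0_G$, both impossible. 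If instead $G$ is infinite torsion, then every finitely generated subgroup of $G$ is finite while $G$ itself is infinite, so $G$ contains finite subgroups of arbitrarily large order; since ${\rm D}(H)\to\infty$ as $|H|\to\infty$ over finite abelian groups, I can pick a finite subgroup $H\le G$ containing $g$ (for instance $H=\langle g\rangle+H'$ for a sufficiently large finite subgroup $H'$) with ${\rm D}(H)\ge 2\mathcal{M}$, and apply the finite case already proved inside $H$ to produce the required $T\in\mathcal{F}(H)\subseteq\mathcal{F}(G)$.

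The step requiring most care is the choice of $y$ in the infinite-order subcase: a naive $y=x$ fails when $g\in\langle x\rangle\setminus\{0_G\}$, forcing one to replace $x$ by a sufficiently large multiple so that $g$ lies outside $\langle y\rangle$; the case $g=0_G$ is then handled uniformly by the same construction because $ky\neq 0_G$ for all $k\ge 1$ when $y$ has infinite order. A secondary point is the growth ${\rm D}(H)\to\infty$ in the torsion subcase, which follows from standard lower bounds such as ${\rm D}(H)\ge 1+\lfloor\log_2|H|\rfloor$ applied to an ascending chain of finite subgroups of $G$.
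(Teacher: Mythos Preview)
Your proof is correct. The finite case is essentially identical to the paper's: both start from a minimal zero-sum sequence $U$ of length ${\rm D}(G)$, invoke Lemma~\ref{Lemma folklore davenport} to find a subsequence summing to $g$, and pass to the negated complement if that subsequence is too short. (You also cover $g=0_G$ explicitly; the paper's write-up of the finite case cites Theorem~A and tacitly restricts to $g\neq 0_G$.)

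For the infinite case the two arguments diverge. The paper gives a single uniform construction with no case split: starting from the empty sequence, it repeatedly appends an element $b\in G$ chosen so that neither $0_G$ nor $g$ lies in the enlarged subset-sum set $\Sigma(L\cdot b)$ (possible because $G$ is infinite while the forbidden set is finite), thereby producing a sequence $V$ of length at least $\mathcal{M}$ with $0_G,\,g\notin\Sigma(V)$, and then sets $T=(g-\sigma(V))\cdot V$. Your approach instead splits on whether $G$ contains an element of infinite order, giving the explicit sequence $T=y^{[\mathcal{M}-1]}\cdot\bigl(g-(\mathcal{M}-1)y\bigr)$ in the non-torsion case and reducing the torsion case to the already-proved finite case via a large enough finite subgroup. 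The paper's route is shorter and avoids both the case distinction and the auxiliary growth fact ${\rm D}(H)\to\infty$ as $|H|\to\infty$; your route is more explicit when an infinite-order element is available and reuses the finite case cleanly in the torsion situation. Either argument is perfectly adequate here.
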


\begin{proof} We consider first the case that $|G|$ is infinite. Let $L\in \mathcal{F}(G)$ be an arbitrary  zero-sum free sequence such that $g\notin \sum(L)$. Since $|G|$ is infinite and $|\Sigma(L)\cup \{0_G\}|$ is finite, there exists some element $b\in G\setminus \{0_G\}$ such that $0_G, g\notin b+(\Sigma(L)\cup \{0_G\})=\Sigma(L\cdot b).$ By the arbitrariness of $L$, we can find a sequence $V\in \mathcal{F}(G)$ inductively with length at least $\mathcal{M}$ and
\begin{equation}\label{equation find a sequence 1}
0_G, g\notin \Sigma(V).
\end{equation}
Let \begin{equation}\label{equation find a sequence 2}
g'=g-\sigma(V),\end{equation} and let \begin{equation}\label{equation find a sequence 3}
T=g'\cdot V.\end{equation}
By \eqref{equation find a sequence 1}, \eqref{equation find a sequence 2} and \eqref{equation find a sequence 3}, we can verify that $T$ is a sequence of length $|T|>\mathcal{M}$ and of sum $\sigma(T)=g$ such that $T$ contains no nonempty proper zero-sum subsequence.

The remaining case that $|G|$ is finite and $g\neq 0_G$ follows immediately from Theorem A given by Ska{\l}ba \cite{Skalba}. For the reader's convenience, we present its short proof below.

Assume $|G|$ is finite. Take a minimal zero-sum sequence $L\in \mathcal{F}(G)$ of length $|L|={\rm D}(G)$. By Lemma \ref{Lemma folklore davenport}, there exists a nonempty subsequence $V$ with $\sigma(V)=g$. If $|V|\geq \frac{1}{2}{\rm D}(G)$, we are done. Hence, we assume $|V|<\frac{1}{2}{\rm D}(G)$. We check that $$T=\prod\limits_{c\mid LV^{[-1]}} (-c)$$ is a sequence with $$|\prod\limits_{c\mid LV^{[-1]}} (-c)|=|LV^{[-1]}|>\frac{1}{2}{\rm D}(G)$$ and  $$\sigma(\prod\limits_{c\mid LV^{[-1]}}(-c))=-\sigma(LV^{[-1]})=\sigma(V)=g,$$ which contains no nonempty proper zero-sum subsequence.
This proves the lemma.
\end{proof}

\medskip

\begin{lemma}\cite{SavChen,Yuanpingzhi}\label{Lemma savchen} \ For $n>1$, let $\mathbb{Z}_n$ be the additive group of integers modulo $n$.  Let $T\in \mathcal{F}(\mathbb{Z}_n)$ be a zero-sum free sequence of length greater than $\frac{n}{2}$. Then there exists some integer $b$ coprime to $n$ such that $\sum\limits_{c\mid T}|b c|_n<n$, where $|b c|_n$ denotes the least positive residue of $bc$ modulo $n$.
\end{lemma}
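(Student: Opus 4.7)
The plan is to derive this lemma from the Savchev--Chen inverse theorem for long zero-sum free sequences in cyclic groups, of which it is essentially a reformulation. First I would reduce to the case that $T$ generates $\mathbb{Z}_n$: writing $T = c_1 \cdot c_2 \cdots c_\ell$ with $\ell > n/2$, if $d := \gcd(c_1, \ldots, c_\ell, n) > 1$, then all terms lie in the subgroup $\langle d \rangle \cong \mathbb{Z}_{n/d}$, where $T$ remains zero-sum free, so $\ell \leq {\rm D}(\mathbb{Z}_{n/d}) - 1 = n/d - 1 \leq n/2 - 1$, contradicting $\ell > n/2$. Thus $T$ generates $\mathbb{Z}_n$, and zero-sum freeness gives $\ell + 1$ distinct partial sums in $\mathbb{Z}_n$ under any reordering, forcing $\ell \leq n - 1$.

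The core of the proof is to produce a unit $b \in (\mathbb{Z}/n\mathbb{Z})^*$ such that, writing $x_i = |bc_i|_n \in [1, n-1]$, the integer sum $\sum_i x_i$ is strictly less than $n$. I would choose $b$ minimizing the $b$-norm $\|T\|_b := \sum_i x_i$ and argue $\|T\|_b < n$ by induction on $n$. For prime $n$, every nonzero term is a unit, and a direct extremal argument, using the cyclic-group bound $|\Sigma(T)| \geq \ell > n/2$ together with the minimality of $b$, forces the partial sums of $bT$ in an appropriately chosen order not to wrap around modulo $n$. For composite $n$, one splits into two cases: if some term of $T$ is coprime to $n$, the prime argument adapts; otherwise every term shares a common prime factor with $n$, and one descends via an appropriate quotient to invoke the inductive hypothesis on a derived sequence of smaller modulus, tracking the normalization back up.

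The main obstacle is this extremal step. A naive averaging yields only $\min_b \|T\|_b \leq n\ell/2$ (since $\|T\|_b + \|T\|_{-b} = n\ell$ for any unit $b$, using $|bc|_n + |(-b)c|_n = n$ for every $c \not\equiv 0$), which is far too weak. The improvement must exploit the zero-sum free structure: namely, that the $\ell+1$ partial sums $s_0 = 0, s_1, \ldots, s_\ell$ are distinct and nonzero in $\mathbb{Z}_n$, and that under the correct choice of $b$ and reordering they can be arranged so none wraps around modulo $n$. Establishing this non-wrap-around property is the heart of the Savchev--Chen theorem; I expect the most delicate bookkeeping to occur there, balancing the number of ``large'' terms $|bc_i|_n > n/2$ against the density constraint $|\Sigma(T)| > n/2$ forced by the length hypothesis.
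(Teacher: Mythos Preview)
The paper does not prove this lemma at all; it is stated with a citation to Savchev--Chen and Yuan and then used as a black box in the proof of Lemma~2.4. So there is no ``paper's own proof'' to compare against.

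Your sketch is a reasonable outline of how the cited results are proved, and your opening reduction (that $T$ must generate $\mathbb{Z}_n$, via the Davenport bound for proper subgroups) is correct and standard. However, the proposal is not a self-contained argument: you explicitly acknowledge that ``establishing this non-wrap-around property is the heart of the Savchev--Chen theorem,'' which means your proof ultimately appeals to the very result being stated. In that sense you are doing exactly what the paper does---citing Savchev--Chen/Yuan---just with more surrounding commentary. One small wrinkle in your inductive dichotomy: in the composite case you write ``otherwise every term shares a common prime factor with $n$,'' but since you have already shown $T$ generates $\mathbb{Z}_n$, the terms cannot all lie in a single maximal subgroup, so they do not share a \emph{common} prime divisor of $n$; each term is merely non-coprime to $n$ individually. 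The actual descent in Yuan's argument is more delicate than a single quotient, and your sketch does not indicate how that would go.
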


\medskip

\begin{lemma}\label{Lemma Da(Zn)} \  Let $n>1$ be an even number. Let $a$ be the unique element of order two in the group $\mathbb{Z}_n$. Then ${\rm D}_a(\mathbb{Z}_n)=\frac{n}{2}$.
\end{lemma}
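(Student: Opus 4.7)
The plan is to establish the equality by matching lower and upper bounds for the maximal length of an irreducible sequence with sum $a = n/2$. Observe first that since $\mathbb{Z}_n$ is a group, any sequence $T$ with $\sigma(T) = a \neq 0_{\mathbb{Z}_n}$ is irreducible if and only if it is zero-sum free: any nonempty proper zero-sum subsequence $V$ would make $TV^{[-1]}$ a proper subsequence with the same sum as $T$, and conversely, a proper subsequence with the same sum as $T$ produces a nonempty (proper) zero-sum subsequence.

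For the lower bound ${\rm D}_a(\mathbb{Z}_n) \geq n/2$, I would exhibit the constant sequence $T_0 = 1^{[n/2]}$. Its sum is $n/2 = a$, and every nonempty subsequence has the form $1^{[k]}$ for some $1 \leq k \leq n/2$, whose sum $k \pmod n$ lies in $[1, n/2]$ and is therefore nonzero. Hence $T_0$ is a zero-sum free sequence of length $n/2$ representing $a$, so it is irreducible.

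For the upper bound ${\rm D}_a(\mathbb{Z}_n) \leq n/2$, suppose for contradiction that $T \in \mathcal{F}(\mathbb{Z}_n)$ is an irreducible sequence with $\sigma(T) = a$ and $|T| \geq n/2 + 1$. Then $T$ is zero-sum free of length exceeding $n/2$, so Lemma \ref{Lemma savchen} yields an integer $b$ coprime to $n$ with $s := \sum_{c \mid T}|bc|_n < n$. The crucial observation is that because $n$ is even, $b$ must be odd, so $b \cdot a = b \cdot (n/2) \equiv n/2 \pmod n$; consequently $s \equiv \sigma(bT) \equiv n/2 \pmod n$ as an integer. On the other hand, zero-sum freeness of $T$ forces $0 \notin T$, so each $|bc|_n$ lies in $[1, n-1]$, giving $s \geq |T| \geq n/2 + 1$. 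Combined with $0 < s < n$ and $s \equiv n/2 \pmod n$ this forces $s = n/2$, contradicting $s \geq n/2 + 1$.

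The main obstacle I anticipate is the residue bookkeeping in the upper bound: everything hinges on the parity trick that $b(n/2) \equiv n/2 \pmod n$ for every unit $b$ of $\mathbb{Z}_n$ when $n$ is even, which is what channels the inequality of Lemma \ref{Lemma savchen} into a sharp contradiction. Once this identity is in hand, matching $n/2 \leq {\rm D}_a(\mathbb{Z}_n) \leq n/2$ closes the proof.
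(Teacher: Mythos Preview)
Your proof is correct and follows essentially the same approach as the paper: the same explicit sequence $1^{[n/2]}$ for the lower bound, and for the upper bound the same application of Lemma~\ref{Lemma savchen} combined with the observation that any unit $b$ modulo an even $n$ is odd and hence fixes $a=n/2$, forcing $\sum_{c\mid T}|bc|_n\equiv n/2\pmod n$ while being strictly between $n/2$ and $n$. The paper compresses the parity step into a ``w.l.o.g.''\ replacing $T$ by $bT$, but the argument is the same.
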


\begin{proof} We see $\bar{1}^{[\frac{n}{2}]}\in \mathcal{F}(\mathbb{Z}_n)$ is an irreducible sequence of length $\frac{n}{2}$ and of sum $a$. This implies that
${\rm D}_a(\mathbb{Z}_n)\geq \frac{n}{2}$. To prove the conclusion, we suppose to the contrary that ${\rm D}_a(\mathbb{Z}_n)>\frac{n}{2},$ i.e., there exists an irreducible sequence $T\in \mathcal{F}(\mathbb{Z}_n)$ of length at least $\frac{n}{2}+1$ with $\sigma(T)=a$. Since $a\neq 0_G$, we see that $T$ is also zero-sum free. By Lemma \ref{Lemma savchen}, we may assume w.l.o.g. that $$\sum\limits_{c\mid T} |c|_n<n.$$ Since $|T|>\frac{n}{2}$, we also have $$\sum\limits_{c\mid T} |c|_n>\frac{n}{2},$$ a contradiction with ${\rm ord}(\sigma(T))=2$. This proves the lemma.
\end{proof}

\medskip

\begin{lemma}\label{Lemma folklore} (folklore) \ For any element $a\in \mathcal{S}^{0}$, ${\rm U}(\mathcal{S}^{0})$ acts on the congruence class  $H_a$.
\end{lemma}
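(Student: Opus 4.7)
The plan is to unpack the definition of a group action and verify each axiom for the natural candidate $(u,b) \mapsto u+b$. The first (and essentially only) nontrivial task is the stability condition: for $u \in {\rm U}(\mathcal{S}^{0})$ and $b \in H_a$, the sum $u+b$ still lies in $H_a$. Since membership in $H_a$ is characterized by the principal-ideal equality $(x) = (a)$, and since $b \in H_a$ already gives $(b) = (a)$, it suffices to check $(u+b) = (b)$. The inclusion $(u+b) \subseteq (b)$ is immediate, as $u+b = b+u \in (b)$ by the very definition of principal ideal in $\mathcal{S}^{0}$. For the reverse inclusion, I would use the inverse $u' \in {\rm U}(\mathcal{S}^{0})$ with $u+u' = 0_{\mathcal{S}^{0}}$: any element $b+c \in (b)$ can be rewritten as $b+c = 0_{\mathcal{S}^{0}} + b + c = u' + u + b + c = u' + ((u+b)+c) \in (u+b)$. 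Hence $(u+b) = (b) = (a)$, which gives $u+b \in H_a$.

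Once stability holds, the two group-action axioms fall out of the monoid structure of $\mathcal{S}^{0}$: the identity axiom $0_{\mathcal{S}^{0}} + b = b$ is built into the definition of $0_{\mathcal{S}^{0}}$, while compatibility $(u_1 + u_2) + b = u_1 + (u_2 + b)$ is simply associativity in $\mathcal{S}^{0}$ (noting that the group law on ${\rm U}(\mathcal{S}^{0})$ is the restriction of $+$, as given in the introduction). There is no genuine obstacle; the lemma is the semigroup-theoretic counterpart of the familiar fact that multiplying by a unit leaves a principal ideal unchanged, and the whole verification amounts to a short, formal manipulation of the definitions. If one wished, the argument could equivalently be phrased as saying that the Green's preorder is invariant under translation by units, which then trivially passes to the $\mathcal{H}$-classes.
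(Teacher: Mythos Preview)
Your argument is correct and complete: you verify stability of $H_a$ under translation by units via the equality of principal ideals $(u+b)=(b)$, and then the identity and compatibility axioms are immediate from the monoid structure. Note that the paper itself offers no proof of this lemma---it is simply labeled ``folklore'' and stated without argument---so there is nothing to compare against; your verification is exactly the standard one that the label presumes the reader can supply.
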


\medskip

\begin{lemma} (see Lemma 4.2, Proposition 4.3 and Proposition 4.6 of Chapter I in \cite{Grillet monograph})\label{Lemma schutzenburger group} \  Let $a$ be an element of $\mathcal{S}$. Then,

(i). the Sch$\ddot{u}$tzenberger group $\Gamma(H_a)$ is a simply transitive group of permutations of $H_a$;

(ii). $H_a$ is a subgroup of $\mathcal{S}$ if and only if $(a+a) \ \mathcal{H} \ a$;

(iii). if $H_a$ is a subgroup of $\mathcal{S}$ then $\Gamma(H_a)\cong H_a$.
\end{lemma}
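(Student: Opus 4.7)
The plan is to prove the three parts directly from definitions, exploiting throughout the identification $x \ \mathcal{H} \ y \Leftrightarrow (x) = (y)$, which lets us write $y = x + e$ and $x = y + e'$ for some $e, e' \in \mathcal{S}^{0}$ whenever $x \ \mathcal{H} \ y$. The absence of cancellation in a general commutative semigroup makes these rewriting identities essentially the only arithmetic tool available.

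For (i), the heart of the matter is to establish that $\Gamma(H_a)$ acts simply transitively on $H_a$. Given $x, y \in H_a$ I write $y = x + c$ (from $(y) \subseteq (x)$) and verify that $c \in {\rm St}(H_a)$: for an arbitrary $z \in H_a$, writing $z = x + e$ and $x = z + e'$, one computes $c + z = y + e$, and the substitution $(y+e)+e' = (x+c+e)+e' = (z+e')+c = x+c = y$ yields $y + e \ \mathcal{H} \ y$, hence $c + z \in H_a$. Simple transitivity follows from the same device: if $c + x = d + x$, then $c + z = c + x + e = d + x + e = d + z$ for every $z = x + e \in H_a$, so $\gamma_c = \gamma_d$. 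That each $\gamma_c$ is a bijection of $H_a$ is then a byproduct of simple transitivity combined with transitivity. The group structure on $\Gamma(H_a)$ via $\gamma_c + \gamma_d = \gamma_{c+d}$ is routine: commutativity transfers from $\mathcal{S}$, an identity is produced by transitivity from any $c$ fixing one (hence every) $x \in H_a$, and inverses come from solving $\gamma_d \circ (c+x) = x$ via transitivity.

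For (ii), the forward implication is immediate: a subgroup is closed under $+$, so $a + a \in H_a$ and $(a + a) \ \mathcal{H} \ a$. For the converse, assume $(a + a) \ \mathcal{H} \ a$ and pick $c \in \mathcal{S}$ with $a + a + c = a$. Setting $u = a + c$ gives $u + a = a$, whence $u + u = a + c = u$, so $u$ is idempotent; the relations $u = a + c$ and $a = a + u$ force $(u) = (a)$, i.e., $u \in H_a$. For any $x = a + e \in H_a$ one then has $u + x = a + e = x$, so $u$ is the two-sided identity on $H_a$. Closure $x + y \in H_a$ is the same ideal computation as in (i). To obtain inverses \emph{within} $H_a$, I invoke (i): transitivity of $\Gamma(H_a)$ yields some $c' \in {\rm St}(H_a)$ with $c' + x = u$, and then $y := c' + u \in H_a$ satisfies $x + y = x + c' + u = u + u = u$, so $y$ is the desired inverse.

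For (iii), with $u$ the identity of the group $H_a$, the candidate isomorphism is $\phi \colon H_a \to \Gamma(H_a)$, $x \mapsto \gamma_x$: it is well-defined because closure gives $H_a \subseteq {\rm St}(H_a)$, a homomorphism by \eqref{equation operation}, injective because $\gamma_x = \gamma_y$ forces $x = \gamma_x \circ u = \gamma_y \circ u = y$, and surjective because for any $\gamma_c$ one has $\gamma_c \circ u = c + u = \gamma_{c+u} \circ u$, so simple transitivity gives $\gamma_c = \gamma_{c+u}$ with $c + u \in H_a$. The principal obstacle throughout is that without cancellation one cannot directly solve for elements; in particular the converse of (ii) requires the slightly clever extraction of the idempotent $u = a + c$ and the subsequent funneling of the inverse problem through the transitive action supplied by (i), rather than through any direct algebraic manipulation inside $H_a$.
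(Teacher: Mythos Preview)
Your argument is correct. The paper does not prove this lemma at all---it is quoted from Grillet's monograph (Chapter~I, Lemma~4.2 and Propositions~4.3, 4.6)---so your self-contained verification goes beyond what the paper offers, and the route you take (extract transitivity and freeness first, then read off the group axioms, then in~(ii) manufacture the idempotent $u=a+c$ from $a+a+c=a$ and borrow inverses from the action in~(i)) is exactly the standard one. One small technicality is worth recording: in your transitivity step for~(i) you write $y=x+c$ with $c\in\mathcal{S}^{0}$, but the paper defines ${\rm St}(H_a)\subseteq\mathcal{S}$; when $x=y$, $|H_a|=1$, and $\mathcal{S}$ lacks an identity, the only available $c$ may be the adjoined $0_{\mathcal{S}^{0}}\notin\mathcal{S}$, leaving ${\rm St}(H_a)$ empty and $\Gamma(H_a)$ vacuous. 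Grillet's convention (and the one the paper tacitly relies on) is to let the stabilizer live in $\mathcal{S}^{0}$, which guarantees the identity permutation is always present; with that reading your proof goes through without change.
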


\medskip

Now we are in a position to prove Theorem \ref{Theorem Main theorem abstract}.

\bigskip

\noindent {\sl Proof of Theorem \ref{Theorem Main theorem abstract}}. \
By Conclusion (i) of Lemma \ref{Lemma schutzenburger group}, we conclude that there exists a bijection of $\Gamma(H_a)$ onto $H_a$, i.e., $$|\Gamma(H_a)|=|H_a|.$$
Also, it is well known that $|\Gamma(H_a)|$ is finite if and only if ${\rm D}(\Gamma(H_a))$ is finite.

We first assume that $|H_a|$ is finite and prove ${\rm D}_a(\mathcal{S}_R)\leq \Psi(a)+{\rm D}(\Gamma(H_a))-1$. Take an arbitrary sequence $T\in {\cal F}(\mathcal{S}_R)$ with  \begin{equation}\label{equation |T|geq}
|T|\geq \Psi(a)+{\rm D}(\Gamma(H_a))
\end{equation} and $$\sigma(T)=a.$$ It suffices to show that the sequence $T$ is reducible. Let $T_1$ be a shortest subsequence of $T$ with
\begin{equation}\label{equation sigma(T1)Hsigma(T)}
\sigma(T_1) \ \mathcal{H} \ \sigma(T),
\end{equation}
 (note that $T_1$ is perhaps the empty subsequence $\varepsilon$ when $\sigma(T)\in {\rm U}(\mathcal{S})$).
 Assume $$T_1=\prod\limits_{i=1}^{k} a_i$$ where $k=|T_1|\geq 0$. By the minimality of $|T_1|$, we have that $$0_{\mathcal{S}}\succ_{\mathcal{H}} a_1\succ_{\mathcal{H}} a_1+a_2\succ_{\mathcal{H}}\cdots \succ_{\mathcal{H}}\sum\limits_{i=1}^k a_i \ \mathcal{H}\ a,$$ which implies that
\begin{equation}\label{equation length T1}
|T_1|=k\leq \Psi(a).
\end{equation}

For each term $c\mid TT_1^{[-1]}$, since $\sigma(T)=\sigma(T_1)+c+\sigma(TT_1^{[-1]} c^{[-1]})\leqq_{\mathcal{H}} \sigma(T_1)+c\leqq_{\mathcal{H}} \sigma(T_1)$, combined with \eqref{equation sigma(T1)Hsigma(T)}, we have that $$(\sigma(T)+c) \ \mathcal{H} \ (\sigma(T_1)+c) \ \mathcal{H} \ \sigma(T)$$ and thus $$c\in {\rm St}(H_a).$$ Combined with \eqref{equation |T|geq} and \eqref{equation length T1}, we see that $$\prod\limits_{c\mid TT_1^{[-1]}} \gamma_c\in {\cal F}(\Gamma(H_a))$$ is a sequence of length $|\prod\limits_{c\mid TT_1^{[-1]}} \gamma_c|=|TT_1^{[-1]}|=|T|-|T_1|\geq {\rm D}(\Gamma(H_a)).$ Combined with \eqref{equation operation},  there exists a {\bf nonempty} subsequence $T_2\mid TT_1^{[-1]}$ such that
\begin{equation}\label{equation sum gamma=0}
\gamma_{\sigma(T_2)}=\sigma(\prod\limits_{c\mid T_2}\gamma_c)=0_{\Gamma(H_a)}.
\end{equation}
By \eqref{equation sigma(T1)Hsigma(T)} and \eqref{equation sum gamma=0}, we conclude that
$$\begin{array}{llll}
\sigma(T)&=& \sigma(TT_1^{[-1]}T_2^{[-1]})+(\sigma(T_1)+\sigma(T_2)) \\
&=& \sigma(TT_1^{[-1]}T_2^{[-1]})+\gamma_{\sigma(T_2)}\circ \sigma(T_1)\\
&=&  \sigma(TT_1^{[-1]}T_2^{[-1]})+0_{\Gamma(H_a)}\circ\sigma(T_1) \\
&=& \sigma(TT_1^{[-1]}T_2^{[-1]})+\sigma(T_1) \\
&=& \sigma(TT_2^{[-1]}),\\
\end{array}$$
which implies $T$ is reducible. Hence, for the case $|H_a|$ is finite, ${\rm D}_a(\mathcal{S})$ is finite and $${\rm D}_a(\mathcal{S})\leq \Psi(a)+{\rm D}(\Gamma(H_a))-1$$ is proved.

\medskip

Now we assume that $(a+a) \ \mathcal{H} \ a$.
By Conclusions (ii) and (iii) of Lemma \ref{Lemma schutzenburger group}, $H_a$ is a subgroup of the semigroup $\mathcal{S}$ and
\begin{equation}\label{equation D(Ha)=D(Gamma)}
\Gamma(H_a)\cong H_a.
\end{equation}
By \eqref{equation D(Ha)=D(Gamma)} and Lemma \ref{Lemma find a sequence}, since $a$ is not the identity element of the semigroup $\mathcal{S}$ (even if $\mathcal{S}$ has an identity),
we can find a sequence $T\in \mathcal{F}(H_a)$ with $$\sigma(T)=a$$ and
$$\begin{array}{llll}|T|\geq \left\{\begin{array}{llll}
               \frac{1}{2} \ {\rm D}(\Gamma(H_a)),  & \mbox{if \ \ } {|H_a| \mbox{ is finite};}\\
               \mathcal{M},  & \mbox{if \ \ } otherwise,\
              \end{array}
              \right.
\end{array}$$
such that $T$ contains no {\bf proper} subsequence $T'$ with $\sigma(T')=\sigma(T)$,  i.e., $T$ is irreducible. This proves that for the case that $(a+a) \ \mathcal{H} \ a$, if $H_a$ is infinite then ${\rm D}_a(\mathcal{S})$ is infinite, and if $H_a$ is finite then $${\rm D}_a(\mathcal{S})\geq \frac{1}{2} \ {\rm D}(\Gamma(H_a)).$$

Now assume that $(a+a) \ \mathcal{H} \ a$ does not hold.
By Lemma \ref{Lemma find a sequence}, we can take a sequence $$V\in {\cal F}({\rm St}(H_a))$$ of length
\begin{equation}\label{equation |V|=two choices}
\begin{array}{llll}|V|=\left\{\begin{array}{llll}
               {\rm D}(\Gamma(H_a))-1,  & \mbox{if \ \ } {|H_a| \mbox{ is finite};}\\
               \mathcal{M},  & \mbox{if \ \ } otherwise,\
              \end{array}
              \right.
\end{array}
\end{equation}
 such that $\prod\limits_{c\mid V} \gamma_c\in {\cal F}(\Gamma(H_a))$ is a zero-sum free sequence in the group $\Gamma(H_a)$. Since $\sigma(\prod\limits_{c\mid V} \gamma_c)\neq 0_{\Gamma(H_a)}$, it follows from Conclusion (i) of Lemma \ref{Lemma schutzenburger group} that there exists some element
\begin{equation}\label{equation b in Ha}
b\in H_a\setminus \{a\}
\end{equation} with
\begin{equation}\label{equation sigma(V)+b=a}
\sigma(V)+b= \gamma_{\sigma(V)} \circ b=\sigma(\prod\limits_{c\mid V} \gamma_c) \circ b=a.
\end{equation}
We need to show that $V\cdot b$ is irreducible.
Assume to the contrary that $V\cdot b$ contains a {\bf proper} subsequence $W$ such that $$\sigma(W)=a.$$
If $b\not \mid W$, then $W\mid V$, combined with \eqref{equation b in Ha}, we have that
$$\begin{array}{llll}
a&=& \sigma(V\cdot b)\\
&=&\sigma(W)+\sigma(VW^{[-1]})+b \\
&\leqq_{\mathcal{H}}& \sigma(W)+b \\
&=& a+b \\
&\mathcal{H}& \ a+a \\
&\prec_{\mathcal{H}}& a,\\
\end{array}$$ which is absurd. Hence, we have $$b\mid W.$$ Then $Wb^{[-1]}$ is a {\bf proper} subsequence of $V$. Recalling that the sequence $\prod\limits_{c\mid V} \gamma_c$ is zero-sum free in the group $\Gamma(H_a)$,  we have that $$\gamma_{\sigma(Wb^{[-1]})}\neq \gamma_{\sigma(V)}.$$ By Conclusion (i) of Lemma \ref{Lemma schutzenburger group}, we have $a=\sigma(W)=\sigma(Wb^{[-1]})+b=\gamma_{\sigma(Wb^{[-1]})}\circ b\neq \gamma_{\sigma(V)} \circ b=a$, which is absurd too.
Therefore, this proves that $V\cdot b$ is irreducible. Combined with \eqref{equation |V|=two choices} and \eqref{equation sigma(V)+b=a}, we derive that
$$\begin{array}{llll}{\rm D}_a(\mathcal{S})\geq |V\cdot b|=\left\{\begin{array}{llll}
               {\rm D}(\Gamma(H_a)),  & \mbox{if \ \ } {|H_a| \mbox{ is finite};}\\
               \mathcal{M}+1,  & \mbox{if \ \ } otherwise,\
              \end{array}
              \right.
\end{array}
$$
for the case that $(a+a) \ \mathcal{H} \ a$ does not hold.

\medskip

Now it remains to show that the upper and lower bounds are sharp. The sharpness of the upper bound will be given by the conclusion in Theorem \ref{Theorem rings}. We shall give examples to show the lower bounds are sharp in the rest arguments of this theorem.

Take a positive even integer $n$.
Let $\mathcal{S}_1=<X\mid \mathscr{R}>$ be a finite commutative semigroup generated by the set $X=\{x_1,\ldots,x_r\}$ subject to the defining relation $\mathscr{R}$, where \begin{equation}\label{equation R1=}
\mathscr{R}=\{(n+1) x_i=x_i:i=1,2,\ldots,r\}\cup \{x_i+x_j=x_j:1\leq i<j\leq r\}.
\end{equation}
Take $$a=\frac{n}{2} x_k
 \mbox{ for some }k\in \{1,2,\ldots,r\}.$$ By  \eqref{equation R1=}, we see that
 \begin{equation}\label{equation (a+a)Ha}
 (a+a) \ \mathcal{H} \ a
 \end{equation}
  and
  \begin{equation}\label{equation Ha=Zn}
  H_a=\langle x_k\rangle\cong\mathbb{Z}_n,
   \end{equation} and that
any irreducible sequence $L\in \mathcal{F}(\mathcal{S}_1)$ with $\sigma(L)=a$ must be a sequence of all terms from the subgroup $\langle x_k\rangle$. Combined with \eqref{equation D(Ha)=D(Gamma)}, \eqref{equation (a+a)Ha}, \eqref{equation Ha=Zn}, Lemma \ref{Lemma Da(Zn)} and Conclusion (iii) of Lemma \ref{Lemma schutzenburger group}, we conclude that $${\rm D}_a(\mathcal{S})=\frac{n}{2}=\frac{1}{2}{\rm D}(\mathbb{Z}_n)=\frac{1}{2}{\rm D}(H_a)=\frac{1}{2}{\rm D}(\Gamma(H_a)).$$ This proves that for the case that $(a+a)\ \mathcal{H}\  a$, the lower bound  ${\rm D}_a(\mathcal{S})\geq \frac{1}{2} {\rm D}(\Gamma(H_a))$ is sharp.

 \medskip

 Take an integer $m>2$. Let $\mathcal{S}_2=<X\mid \mathscr{R}>$ be a finite commutative semigroup with a zero element $\infty$ generated by the set $X=\{x_0,x_1,\ldots,x_m\}$ subject to the defining relation $\mathscr{R}$, where
\begin{equation}\label{equation R2=}
\mathscr{R}=\{(m+1) x_0=x_0\}\cup \{x_i+x_j=\infty: i,j\in [1,r]\}\cup \{x_0+x_k=x_{|k+1|_m}: k=1,2,\ldots,m\},
\end{equation}
 and $|k+1|_m$ denotes the least positive residue of $k+1$ modulo $m$. Take $$a=x_m.$$ We see that
 \begin{equation}\label{equation Ha consisting of}
 H_a=\{x_1,\ldots,x_m\},
  \end{equation}
 \begin{equation}\label{equation a+a<a}
 (a+a)\prec_{\mathcal{H}} a,
 \end{equation}
 and
  $$\Gamma(H_a)\cong \langle x_0\rangle \cong \mathbb{Z}_m.$$
Take an arbitrary sequence $L\in \mathcal{F}(S_2)$ with
\begin{equation}\label{equation |L|geq m+1}
|L|\geq {\rm D}(\Gamma(H_a))+1=m+1
\end{equation}
 and \begin{equation}\label{equation sigma(L)=a}
 \sigma(L)=a.
\end{equation}
  By \eqref{equation R2=} and \eqref{equation Ha consisting of}, we derive that $L$ contains exactly one term from $H_a$, and so at least $m$ terms from the subgroup $\langle x_0\rangle$.  Since ${\rm D}(\langle x_0\rangle)={\rm D}(\mathbb{Z}_m)=m$, it follows that there exists a {\bf nonempty} subsequence $L_1$ of $L$ with $\sigma(L_1)=m x_0$, the identity element of the group $\langle x_0\rangle$ which is also the identity element of the semigroup $\mathcal{S}_2$. Hence, $\sigma(L)=\sigma(L_1)+\sigma(LL_1^{[-1]})=mx_0+\sigma(LL_1^{[-1]})=\sigma(LL_1^{[-1]})$, and so $L$ is reducible. By the arbitrariness of $L$ and \eqref{equation |L|geq m+1} and \eqref{equation sigma(L)=a}, we have that
  $${\rm D}_a(\mathcal{S}_2)\leq m={\rm D}(\Gamma(H_a)).$$
  By \eqref{equation a+a<a}, we have ${\rm D}_a(\mathcal{S}_2)\geq  {\rm D}(\Gamma(H_a))$ and so $${\rm D}_a(\mathcal{S}_2)={\rm D}(\Gamma(H_a)).$$
  This proves the lower bound ${\rm D}_a(\mathcal{S})\geq {\rm D}(\Gamma(H_a))$ is sharp for the case that $(a+a)\ \mathcal{H} \ a$ does not hold, and therefore, completes the proof of the Theorem. \qed

\bigskip

To prove Theorem \ref{Theorem rings}, several preliminaries will be necessary.

In the rest of this section, we always admit that $R$ is a commutative unitary ring. The Green's congruence (preorder) are concerned with the operation in the semigroups $\mathcal{S}_R$, i.e., concerned with the multiplication operation of the ring $R$.
To avoid confusions with the previous usage in the proof of Theorem \ref{Theorem Main theorem abstract}, we need to fix some notations.
We still use $\cdot$ and $\prod$ to denote the concatenations of sequences. By $*$, $+_{R}$ and $-_{R}$ we denote the multiplication, addition and subtraction operations in the ring $R$,  respectively. For any sequence $T=\prod\limits_{i=1}^t a_i\in \mathcal{F}(\mathcal{S}_R)$, we denote $$\pi(T)=a_1*\cdots *a_t$$ to be the multiplications of all terms from $T$.

Let $K$ be an ideal of $R$. For any $i\in \mathbb{N}_0$, $K^i$ is the $i$-th power of the ideal $K$. In particular, $$K^{0}=R.$$ Define the index of the ideal $K$, denoted ${\rm ind}(K)$, to be the least $n\in \mathbb{N}_0\cup \{\infty\}$ such that $$K^n=K^{n+1},$$ equivalently, the descending chain of ideals $$K^{0}\supsetneq K^{1}\supsetneq  \cdots \supsetneq K^n=K^{n+1}=K^{n+2}=\cdots$$ keeps stationary starting from $K^n$. For any element $c\in R$,
 we define $\zeta(K:c)$ to be the largest $$\begin{array}{llll}t\in\left\{\begin{array}{llll}
               [0,{\rm ind}(K)],  & \mbox{if \ \ } {\rm ind}(K) \mbox{ is finite};\\
               \mathbb{N}_0\cup \{\infty\},  & \mbox{if \ \ }  \mbox{ otherwise,}\\
              \end{array}
              \right.
\end{array}$$
such that $$c\in K^{t}.$$ In particular, when $K=(a)$ is a principal ideal, we shall write ${\rm ind}(a)$ and $\zeta(a:c)$ in place of ${\rm ind}(K)$ and $\zeta(K:c)$, respectively.

\medskip

\begin{lemma}\label{lemma semigroup isomorphism theorem} (See Proposition 2.4 in Chapter I of \cite{Grillet monograph}) \ Let $\varphi: S\rightarrow T$ and $\tau: S\rightarrow U$ be homomorphisms of semigroups. If $\varphi$ is surjective, then $\tau$ factors through $\varphi$ ($\tau=\xi\circ \varphi$ for some homomorphism $\xi: T\rightarrow U$) if and only if ${\rm ker} \ \varphi\subseteq {\rm ker} \ \tau$; and then $\tau$
factors uniquely through $\varphi$ ($\xi$ is unique). If $\varphi$ and $\tau$ are surjective and ${\rm ker} \ \varphi={\rm ker} \ \tau$, then $\xi$ is an isomorphism.
 \end{lemma}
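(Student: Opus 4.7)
The plan is to verify this as the universal property of a surjective semigroup homomorphism, where $\ker$ denotes the congruence $\{(s,s') : \varphi(s)=\varphi(s')\}$. The proof splits into (1) the biconditional for factorizability together with uniqueness, and (2) the isomorphism statement.

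For the forward implication of the biconditional, I would first observe that if $\tau = \xi \circ \varphi$ and $(s,s') \in \ker \varphi$, then $\tau(s)=\xi(\varphi(s))=\xi(\varphi(s'))=\tau(s')$, so automatically $\ker \varphi \subseteq \ker \tau$. For the converse, assume $\ker \varphi \subseteq \ker \tau$. Since $\varphi$ is surjective, every $t \in T$ admits some $s \in S$ with $\varphi(s)=t$; I would then define $\xi(t):=\tau(s)$. The crucial thing to verify is well-definedness: if $\varphi(s)=\varphi(s')$, then $(s,s') \in \ker\varphi \subseteq \ker\tau$, forcing $\tau(s)=\tau(s')$, so the value $\xi(t)$ does not depend on the chosen preimage.

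Next I would check that $\xi$ is a semigroup homomorphism. Given $t_1,t_2 \in T$ with preimages $s_1,s_2 \in S$, the element $s_1+s_2$ is a preimage of $t_1+t_2$ under $\varphi$, so
\[
\xi(t_1+t_2) = \tau(s_1+s_2) = \tau(s_1)+\tau(s_2) = \xi(t_1)+\xi(t_2).
\]
Uniqueness of $\xi$ is immediate: any two such factorizations must agree on every $\varphi(s)$, and by surjectivity of $\varphi$ this already covers all of $T$.

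Finally, for the isomorphism statement, assume both $\varphi,\tau$ are surjective and $\ker\varphi=\ker\tau$. Surjectivity of $\xi$ follows from surjectivity of $\tau$, since for any $u \in U$ one picks $s$ with $\tau(s)=u$ and then $\xi(\varphi(s))=\tau(s)=u$. For injectivity, suppose $\xi(t_1)=\xi(t_2)$ and choose preimages $s_1,s_2$ under $\varphi$; then $\tau(s_1)=\xi(t_1)=\xi(t_2)=\tau(s_2)$, placing $(s_1,s_2)$ in $\ker\tau=\ker\varphi$, whence $t_1=\varphi(s_1)=\varphi(s_2)=t_2$. I do not anticipate any serious obstacle here: the only subtle point is the well-definedness step, which is exactly where the hypothesis $\ker\varphi \subseteq \ker\tau$ is used, and everything else is a routine diagram chase. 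The argument is essentially the fundamental theorem of homomorphisms adapted to semigroups (where kernels are congruences rather than preimages of an identity), and is valid without assuming commutativity or the existence of an identity.
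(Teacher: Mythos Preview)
Your proof is correct and is the standard argument for the factorization/isomorphism theorem for semigroup homomorphisms via kernel congruences. Note that the paper itself does not supply a proof of this lemma at all: it is quoted from Grillet's monograph as Proposition~2.4 of Chapter~I, so there is no in-paper proof to compare against, and your argument is exactly the expected one.
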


\medskip

 \begin{lemma}\label{Lemma structure of principal} \ Let $P$ be a finite commutative principal ideal unitary ring, and let $(a_1),\ldots,(a_r)$ be all the distinct maximal principal ideals of $P$. Let $b,c$ be elements of $P$. Then the following conclusions hold:

 (i). there exists a factorization $b=a_1^{\zeta(a_1:b)}* \cdots  * a_r^{\zeta(a_r:b)} * u$ for some unit $u\in {\rm U}(P)$;

 (ii). if there exist some $(m_1,\ldots, m_r)\in [0,{\rm ind}(a_1)]\times\cdots\times [0,{\rm ind}(a_r)]$ and $u\in {\rm U}(P)$ such that
 $b=a_1^{m_1}* \cdots * a_r^{m_r} * u$, then $(m_1,\ldots, m_r)=(\zeta(a_1:b),\ldots,\zeta(a_r:b))$;

 (iii). $(b)\subseteq (c)\Leftrightarrow  \zeta(a_i:b)\geq\zeta(a_i:c)\mbox{ for all }i\in [1,r]$;

 (iv). $(b)=(c)\Leftrightarrow\zeta(a_i:b)=\zeta(a_i:c)\mbox{ for all }  i\in [1,r]\Leftrightarrow b=c*u\mbox{ for some } u\in {\rm U}(P).$
 \end{lemma}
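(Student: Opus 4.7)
The plan is to reduce all four statements to coordinatewise verifications after decomposing $P$ as a finite product of chain rings. As a finite commutative unitary ring, $P$ is Artinian, so it has finitely many maximal ideals, namely $(a_1),\ldots,(a_r)$ (in a principal ideal ring, maximal ideals coincide with maximal principal ideals), whose intersection is the nilpotent Jacobson radical. Since the $(a_i)^N$ are pairwise comaximal for any sufficiently large $N$, the Chinese Remainder Theorem gives an isomorphism $P \cong P_1 \times \cdots \times P_r$ with $P_i := P/(a_i)^N$ a finite local principal ideal ring. Every local principal ideal ring with nilpotent maximal ideal is a \emph{chain ring}: writing $\mathfrak{m}_i = (\pi_i)$ for its maximal ideal, the ideals of $P_i$ form the chain $P_i \supsetneq (\pi_i) \supsetneq (\pi_i^2) \supsetneq \cdots \supsetneq (\pi_i^{e_i}) = (0)$, and every nonzero element has a unique expression $\pi_i^{k} u$ with $0 \le k < e_i$ and $u \in {\rm U}(P_i)$.

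In the product decomposition, the maximal ideal $(a_i)$ corresponds to $M_i := P_1 \times \cdots \times \mathfrak{m}_i \times \cdots \times P_r$, which is the principal ideal generated by $\widetilde{\pi}_i := (1,\ldots,\pi_i,\ldots,1)$; after reindexing, I may assume $(a_i) = (\widetilde{\pi}_i)$. A direct computation then gives $(a_i)^t = P_1 \times \cdots \times \pi_i^t P_i \times \cdots \times P_r$, so ${\rm ind}(a_i) = e_i$, and for $b = (b_1,\ldots,b_r)$ the integer $\zeta(a_i : b)$ is exactly the chain-ring valuation $v_{\pi_i}(b_i)$ (with the convention $v_{\pi_i}(0) = e_i$). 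For (i), choose units $u_i \in {\rm U}(P_i)$ with $b_i = \pi_i^{v_{\pi_i}(b_i)} * u_i$ (taking $u_i = 1$ when $b_i = 0$, using $\pi_i^{e_i} = 0$), and set $u = (u_1,\ldots,u_r) \in {\rm U}(P)$; then $b = \prod_i \widetilde{\pi}_i^{\,\zeta(a_i:b)} * u$. For (ii), equating components in $b = \prod_i a_i^{m_i} * u$ yields $b_i = \pi_i^{m_i} * u_i'$ with $u_i'$ a unit of $P_i$; the uniqueness clause of the chain-ring normal form forces $m_i = v_{\pi_i}(b_i) = \zeta(a_i : b)$ (for $b_i = 0$, the constraint $m_i \le e_i$ combined with $\pi_i^{m_i} = 0$ pins $m_i = e_i$).

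For (iii), the inclusion $(b) \subseteq (c)$ in $P$ is equivalent to $(b_i) \subseteq (c_i)$ in each $P_i$, and in a chain ring this is precisely $v_{\pi_i}(b_i) \ge v_{\pi_i}(c_i)$. Statement (iv) follows by applying (iii) in both directions for the first equivalence, and for the second equivalence by observing that in a chain ring any two generators of the same principal ideal differ by a unit (they are either both $0$ or both of the form $\pi_i^{k} * u$ for the same $k$), whence a global unit of $P$ is assembled componentwise. The only genuine obstacle is the invocation of the structure theorem for finite commutative principal ideal unitary rings; once the decomposition into chain rings is in hand, each of the four assertions is a routine local verification.
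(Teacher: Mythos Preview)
Your argument is correct and takes a different route from the paper. You first decompose $P$ as a product of chain rings via CRT and the Artinian structure, then verify (i)--(iv) coordinatewise using the local normal form $\pi_i^{k}u$. The paper instead works globally throughout: for (i) it sets $x=\prod_i a_i^{\zeta(a_i:b)}$, obtains $b=x*d$ from comaximality of the $(a_i)^{\zeta(a_i:b)}$, and then uses CRT not to split $P$ but to modify the cofactor $d$ into a unit $\widetilde{d}$ with $x*d=x*\widetilde{d}$ (the equality $\bigcap_i (a_i)^{n_i}=\mathbf{0}$ coming from nilpotency of the Jacobson radical); for (ii) it argues by contradiction that $m_1<\zeta(a_1:b)$ would force $(a_1)^{m_1+1}=(a_1)^{m_1}$ via an ideal computation. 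Your approach is more conceptual and renders the four assertions essentially mechanical once the product decomposition is in hand; the paper's approach is more self-contained, never leaving $P$, at the cost of a more hands-on construction in (i). Two small points worth tightening: in the chain-ring normal form only the exponent $k$ is unique, not the unit $u$ (you only use the former, so this does no harm); and to pass from your generators $\widetilde{\pi}_i$ back to the given generators $a_i$ in (i) you are implicitly using that they differ by a unit of $P$, which does follow from your local normal form applied to the $i$-th coordinate of $a_i$ but deserves an explicit sentence.
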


 \begin{proof} \  Since $P$ is finite, we have that ${\rm ind}(a_i)$ is finite, and therefore, $\zeta(a_i:b)$ is finite, where $i\in [1,r]$.
 Let $$n_i={\rm ind}(a_i)\ \mbox{ for }\ i=1,2,\ldots,r.$$

 \medskip

 (i). \ Let $$x=a_1^{\zeta(a_1:b)}* \cdots * a_r^{\zeta(a_r:b)}.$$
 Since $(a_1)^{\zeta(a_1:b)}, \ldots,(a_r)^{\zeta(a_r:b)}$ are coprime in pairs,
 we have that $$b\in \bigcap\limits_{i=1}^r(a_i)^{\zeta(a_i:b)}=(a_1)^{\zeta(a_1:b)}* \cdots * (a_r)^{\zeta(a_r:b)}=(a_1^{\zeta(a_1:b)}* \cdots * a_r^{\zeta(a_r:b)}),$$ which implies that $$b=x * d$$ for some element $d\in P$.
 Let $$I=\{i\in [1,r]: \zeta(a_i:b)<n_i\}.$$
 Note that
 \begin{equation}\label{equation d notin for i in I}
 d\notin (a_i)\mbox{ for each }i\in I.
 \end{equation}
 Since $(a_1)^{n_1}, \ldots,(a_r)^{n_r}$ are coprime in pairs, by the Chinese Remainder Theorem, we can find an element $\widetilde{d}\in P$ such that
 \begin{equation}\label{equation wide d=d}
 \widetilde{d}\equiv d \pmod {(a_i)^{n_i}}\mbox{ for each }i\in I
 \end{equation} and
 \begin{equation}\label{equation wide d=1}
 \widetilde{d}\equiv 1_P \pmod {(a_j)^{n_j}}\mbox{ for each }j\notin I.
 \end{equation}
  Combined \eqref{equation d notin for i in I}, \eqref{equation wide d=d} and \eqref{equation wide d=1}, we have that $\widetilde{d}\notin (a_i)$ for all $i\in [1,r]$, equivalently,
\begin{equation}\label{equation wide d in U(R)}
\widetilde{d}\in {\rm U}(P).
\end{equation}
 By \eqref{equation wide d=d} and \eqref{equation wide d=1}, we have that
 \begin{equation}\label{equation x wide equiv x d 1}
 x * \widetilde{d}\equiv x * d \pmod {(a_i)^{n_i}}\mbox{ for each }i\in I,
 \end{equation}
  and that
 \begin{equation}\label{equation x wide equiv x d 2}x * \widetilde{d}\equiv 0_P * \widetilde{d}=0_P=0_P * d\equiv x * d \pmod {(a_j)^{n_j}}\mbox{ for each } j\notin I.
  \end{equation}
 Since $P$ is Artinian, we know that the Jacobson radical of $P$ is nilpotent, i.e.,
 $(\bigcap\limits_{i=1}^r (a_i))^N=\textbf{0}$ for some $N\in \mathbb{N}$. This implies that $$\bigcap\limits_{i=1}^r (a_i)^{n_i}=\textbf{0},$$ is the zero ideal of $P$. Combined with \eqref{equation x wide equiv x d 1} and  \eqref{equation x wide equiv x d 2}, we have that $x* d \equiv x* \widetilde{d} \pmod {\textbf{0}}$, and thus $$x * d=x *\widetilde{d}.$$ By \eqref{equation wide d in U(R)} and by taking $u=\widetilde{d}$, we have Conclusion (i) proved.

 \medskip

\noindent (ii). \  Since  $b=a_1^{m_1}* \cdots  * a_r^{m_r} * u \in \bigcap\limits_{i=1}^r (a_i)^{m_i}$, we have that
 $$m_i\leq \zeta(a_i:b)\ \mbox{
 for all } \ i=1,2,\ldots,r.$$ To prove the conclusion, we assume to the contrary that $m_i< \zeta(a_i:b)$ for some $i\in [1,r]$, say $$m_1<\zeta(a_1:b).$$
 It follows that $$(b)\subseteq (a_1)^{\zeta(a_1:b)}\subseteq  (a_1)^{m_1+1}.$$
 Since
 $(a_2)^{m_2}*\cdots *(a_r)^{m_r}+_{R}(a_1)=(1_R)$, it follows that $$\begin{array}{llll}
(a_1)^{m_1+1}&=& (b)+_{R}(a_1)^{m_1+1}\\
&=&(a_1^{m_1}* a_2^{m_2}*\cdots * a_r^{m_r})+_{R}(a_1)^{m_1+1} \\
&=&(a_1)^{m_1}*(a_2)^{m_2}\cdots * (a_r)^{m_r}+_{R}(a_1)^{m} * (a_1) \\
&=&(a_1)^{m_1}*[(a_2)^{m_2}*\cdots * (a_r)^{m_r}+_{R}(a_1)] \\
&=& (a_1)^{m_1}*(1_R) \\
&=& (a_1)^{m_1},\\
\end{array}$$
a contradiction with $m_1<n_1$. Therefore, Conclusion (ii) is proved.

\medskip

\noindent (iii). \ Suppose $(b)\subseteq (c)$. Since any ideal containing $c$ must contain $b$, we have that $\zeta(a_i:b)\geq\zeta(a_i:c)$ for all $i\in [1,r]$. Conversely, the sufficiency follows from Conclusion (i).

\medskip

\noindent (iv). \ By (iii), we derive that $(b)=(c)$ holds if and only if $\zeta(a_i:b)=\zeta(a_i:c)$ for all $i\in [1,r]$. Combined with Conclusion (i), we have this conclusion proved immediately.
\end{proof}

\bigskip

Now we are in a position to prove Theorem \ref{Theorem rings}.

\noindent{\sl Proof of Theorem \ref{Theorem rings}.} \
We first show that $\Gamma(H_a)\cong {\rm U}(R_a)$.
Now we consider the multiplicative semigroup $\mathcal{S}_R$ of the ring $R$. Recall that $$\rho_a: {\rm St}(H_a)\rightarrow \Gamma(H_a)$$ is an epimorphism given by $$c\mapsto \gamma_c$$ for any $c\in {\rm St}(H_a)$. Let $$\varphi:R\rightarrow R_a$$ be the canonical epimorphism given by $$\varphi: c\mapsto \overline{c}=c+_{R}{\rm Ann}(a)\in R_a.$$ We see that for any element $c\in R$,
$$\begin{array}{llll}
c\in {\rm St}(H_a)&\Leftrightarrow& (c * a) \ \mathcal{H} \ a \\
&\Leftrightarrow& d * (c * a)=a \ \mbox{ for some } d\in R \\
&\Leftrightarrow& (d * c -_{R} 1_R)* a=0_R \ \mbox{ for some } d\in R \\
&\Leftrightarrow& d * c -_{R} 1_R\in {\rm Ann}(a) \ \mbox{ for some } d\in R\\
&\Leftrightarrow& \overline{d} * \overline{c}=\overline{1_R} \ \mbox{ for some } d\in R \\
&\Leftrightarrow& \overline{c}\in {\rm U}(R_a),\\
\end{array}$$
i.e., the restriction $\varphi |_{{\rm St}(H_a)}$ of $\varphi$ within the domain ${\rm St}(H_a)$ is an epimorphism of ${\rm St}(H_a)$ onto ${\rm U}(R_a)$, for convenience, we still use $$\varphi: {\rm St}(H_a)\rightarrow {\rm U}(R_a)$$ to denote this epimorphism.  By Lemma \ref{lemma semigroup isomorphism theorem}, to set an isomorphism of ${\rm U}(R_a)$ onto $\Gamma(H_a)$, it suffices to show that
 $${\rm ker}\ \varphi={\rm ker} \ \rho_a.$$

We see that for any
$(c,d)\in {\rm St}(H_a)\times {\rm St}(H_a)$,
$$(c,d)\in {\rm ker}\  \varphi\Leftrightarrow \overline{c}=\overline{d}\Leftrightarrow c-_{R} d\in{\rm Ann}(a)\Leftrightarrow (c-_{R} d)*a=0_R \Leftrightarrow c* a=d* a\Leftrightarrow \gamma_c\circ a=\gamma_d\circ a,$$ in addition,  by Conclusion (i) of Lemma \ref{Lemma schutzenburger group}, we see that $\gamma_c\circ a=\gamma_d\circ a$ if and only if, $\gamma_c=\gamma_d$, and equivalently, $(c,d)\in {\rm ker}\ \rho_a$.  This proves that ${\rm ker}\ \varphi={\rm ker}\ \rho_a$ and thus
$${\rm U}(R_a)\cong \Gamma(H_a).$$
Combined with Theorem \ref{Theorem Main theorem abstract},  we have that
if ${\rm U}(R_a)$ is infinite then ${\rm D}_a(\mathcal{S}_R)$ is infinite, and if ${\rm U}(R_a)$ is finite then ${\rm D}_a(\mathcal{S}_R)$ is finite and
\begin{equation}\label{equation Da(R)leq}
{\rm D}_a(\mathcal{S}_R)\leq \Psi(a)+{\rm D}({\rm U}(R_a))-1.
\end{equation}

\medskip

Now we assume that $R$ is a finite commutative principal ideal unitary ring.  Trivially, $R$ satisfies the a.c.c. for ideals and ${\rm U}(R_a)$ is finite.  We need to show that $${\rm D}_a(\mathcal{S}_R)= \Psi(a)+{\rm D}({\rm U}(R_a))-1.$$
By \eqref{equation Da(R)leq}, it suffices to construct an irreducible sequence $T\in \mathcal{F}(\mathcal{S}_R)$ of length $\Psi(a)+{\rm D}({\rm U}(R_a))-1$ with sum $\sigma(T)=a$.

We show the following.

\noindent \textbf{Claim.} \ For any  $\gamma\in \Gamma(H_a)$, there exists some $u\in {\rm U}(R)$ such that $\rho_a(u)=\gamma$.

{\sl Proof of the claim.} \ Let $x=\gamma\circ a$. Since $x\ \mathcal{H} \ a$, i.e., $(x)=(a)$, it follows from Conclusion (iv) of Lemma \ref{Lemma structure of principal} that $x=a * u$ for some $u\in {\rm U}(R)$. Then $$\gamma_u\circ a=\gamma\circ a.$$ By Conclusion (i) of Lemma \ref{Lemma schutzenburger group}, we derive that $\rho_a(u)=\gamma_u=\gamma$. This proves this claim. \qed

Let $(a_1), \ldots,(a_r)$ be the all distinct maximal ideals of $R$. By the above claim, we can take a sequence
$$V\in \mathcal{F}({\rm U}(R))$$
 such that $\rho_a(V)=\prod\limits_{v\mid V} \gamma_v$ is a zero-sum free sequence in the group $\Gamma(H_a)$ with length \begin{equation}\label{equation length of V}
|\rho_a(V)|=|V|={\rm D}(\Gamma(H_a))-1.
\end{equation}
 By Conclusion (i) of Lemma \ref{Lemma schutzenburger group}, there exists an element
\begin{equation}\label{equation b in Ha-a}
b\in H_a\setminus \{a\}
\end{equation}
 such that
 \begin{equation}\label{equation pi(Vb)=a}
 \pi(V\cdot b)=\pi(V)* b=\gamma_{\pi(V)}\circ b=\rho_a(\pi(V))\circ b=a.
 \end{equation}
By \eqref{equation b in Ha-a} and Conclusion (i) of Lemma \ref{Lemma structure of principal}, we derive that
\begin{equation}\label{equation b=a1...aru}
b=a_1^{m_1}* \cdots  * a_r^{m_r} * u,
\end{equation}
 where $$u\in {\rm U}(R)$$ and $$m_i=\zeta(a_i:b)=\zeta(a_i:a) \mbox{ for all } i\in [1,r].$$
By Conclusions (i), (ii) and (iii) of Lemma \ref{Lemma structure of principal}, we conclude that
\begin{equation}\label{equation omega(a)=}
\Psi(a)=\Psi(b)=\Sigma_{i=1}^r \zeta(a_i,b)=\Sigma_{i=1}^r m_i.
\end{equation}
Since $a\notin {\rm U}(R)$, there exists some $i\in [1,r]$ such that $m_i>0$, say $$m_1>0.$$
Let $$T=V\cdot (a_1* u)\cdot a_1^{[m_1-1]}\cdot \prod\limits_{i=2}^r a_i^{[m_i]}.$$
 It follows from \eqref{equation pi(Vb)=a} and \eqref{equation b=a1...aru} that  $$\pi(T)=a,$$ and follows from \eqref{equation length of V} and \eqref{equation omega(a)=} that $$|T|=|V|+\Sigma_{i=1}^r m_i={\rm D}(\Gamma(H_a))-1+\Sigma_{i=1}^r m_i={\rm D}(\Gamma(H_a))-1+\Psi(a).$$

It remains to show that $T$ is an irreducible sequence. Assume to the contrary that $T$ contains a proper subsequence $W$ such that
\begin{equation}\label{equation pi(W)=pi(T)}
\pi(W)=\pi(T).
\end{equation}
 Since $\pi(W)\ \mathcal{H} \ \pi(T)$, it follows from Lemma \ref{Lemma folklore} and Conclusions (ii), (iii), (iv) of Lemma \ref{Lemma structure of principal} that $$TV^{[-1]}\mid W.$$ Then $$W=L\cdot (TV^{[-1]}),$$ where $L$ is a proper subsequence of $V$. Since $\rho_a(V)$ is zero-sum free in the group $\Gamma(H_a)$, it follows that $$\rho_a(\pi(L))\neq \rho_a(\pi(V)).$$
Combined with Conclusion (i) of Lemma \ref{Lemma schutzenburger group},  we derive that  $\pi(W)=\pi(L)*\pi(TV^{[-1]})=\pi(L)* b=\rho_a(\pi(L))\circ b\neq \rho_a(\pi(V))\circ b=\pi(T)$, a contradiction with \eqref{equation pi(W)=pi(T)}. Hence, $T$ is irreducible. This completes the proof of Theorem \ref{Theorem rings}.  \qed

\section{Concluding remarks}

Theorem \ref{Theorem Main theorem abstract} states that for any element $a\in \mathcal{S}$, in the premise of $\Psi(a)$ being finite, ${\rm D}_a(\mathcal{S})$ is finite if and only if $H_a$ is finite. We remark that $\Psi(a)$ being finite is not necessary when ${\rm D}_a(\mathcal{S})$ is finite. For example, let $X=\{x_i:i\in \mathbb{Z}\}$, and let $\mathcal{S}=<X\mid \mathscr{R}>$ be a commutative semigroup generated by $X$ subject to the defining relation $\mathscr{R}$, where $$\mathscr{R}=\{(n+1)x_i=x_i\}\cup \{x_i+x_j=x_j \mbox{ for any } i<j\}.$$ It is not hard to check that for any $a\in \mathcal{S}$, $\Psi(a)$ is infinite but ${\rm D}_a(\mathcal{S})\leq n.$
Hence, a natural question for general commutative semigroups came up.

\medskip

\noindent \textbf{Question 1.} \ Let $\mathcal{S}$ be any commutative semigroup, and let $a$ be an element of $\mathcal{S}$. From the point of view of semigroup's structure, does there exists a sufficient and necessary condition to decide whether ${\rm D}_a(\mathcal{S})$ is finite or infinite?

\medskip

We return now to the constants ${\rm d}(\mathcal{S})$ and ${\rm D}(\mathcal{S})$. As shown in Proposition $D$, in case that $\mathcal{S}$ is a finite commutative semigroup, ${\rm d}(\mathcal{S})$ is finite and  ${\rm D}(\mathcal{S})={\rm d}(\mathcal{S})+1.$ Actually, both constants ${\rm d}(\mathcal{S})$ and ${\rm D}(\mathcal{S})$ relate closely to each other in any commutative semigroups (not necessarily finite), which can be seen from the following.

\medskip

\begin{prop} Let $\mathcal{S}$ be a commutative semigroup. Then ${\rm D}(\mathcal{S})$ is finite if and only if ${\rm d}(\mathcal{S})$ is finite. Moreover, in case that ${\rm D}(\mathcal{S})$ is finite, we have $${\rm D}(\mathcal{S})={\rm d}(\mathcal{S})+1.$$
\end{prop}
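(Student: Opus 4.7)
The plan is to prove the two inequalities $\mathrm{D}(\mathcal{S}) \leq \mathrm{d}(\mathcal{S}) + 1$ and $\mathrm{d}(\mathcal{S}) \leq \mathrm{D}(\mathcal{S}) - 1$ separately, from which both assertions (the equivalence of finiteness and the stated formula) will follow immediately. Both arguments are essentially direct unwindings of the two definitions.

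For the first inequality, I would take an arbitrary sequence $T = a_1 \cdot a_2 \cdot \ldots \cdot a_m \in \mathcal{F}(\mathcal{S})$ of length $m \geq \mathrm{d}(\mathcal{S}) + 1$ and apply the defining property of $\mathrm{d}(\mathcal{S})$ from Definition C: there exists $I \subseteq [1,m]$ with $|I| \leq \mathrm{d}(\mathcal{S}) < m$ such that $\sum_{i \in I} a_i = \sum_{i=1}^{m} a_i = \sigma(T)$. The subsequence $T' = \prod_{i \in I} a_i$ is then a proper subsequence of $T$ with $\sigma(T') = \sigma(T)$, so $T$ is reducible by definition. This shows that every sequence of length at least $\mathrm{d}(\mathcal{S}) + 1$ is reducible, hence $\mathrm{D}(\mathcal{S}) \leq \mathrm{d}(\mathcal{S}) + 1$; in particular, finiteness of $\mathrm{d}(\mathcal{S})$ implies finiteness of $\mathrm{D}(\mathcal{S})$.

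For the reverse inequality, assume $\mathrm{D}(\mathcal{S})$ is finite. Given any sequence $T = a_1 \cdot \ldots \cdot a_m$, I will produce a subsequence of length at most $\mathrm{D}(\mathcal{S}) - 1$ having the same sum as $T$ by a descending-length iteration. If $m \leq \mathrm{D}(\mathcal{S}) - 1$, take $I = [1,m]$ and stop. Otherwise $|T| \geq \mathrm{D}(\mathcal{S})$, so by the definition of $\mathrm{D}(\mathcal{S})$ the sequence $T$ is reducible: there exists a proper subsequence $T_1 \mid T$ with $\sigma(T_1) = \sigma(T)$ and $|T_1| < |T|$. Replace $T$ by $T_1$ and repeat. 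Since the length strictly decreases at each step, the procedure terminates in finitely many steps at a subsequence of length $\leq \mathrm{D}(\mathcal{S}) - 1$ whose sum equals $\sigma(T)$. Taking $I$ to be the corresponding index set in the original sequence witnesses that $\mathrm{d}(\mathcal{S}) \leq \mathrm{D}(\mathcal{S}) - 1$. In particular, finiteness of $\mathrm{D}(\mathcal{S})$ implies finiteness of $\mathrm{d}(\mathcal{S})$.

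Combining the two inequalities yields both the equivalence of finiteness and the equality $\mathrm{D}(\mathcal{S}) = \mathrm{d}(\mathcal{S}) + 1$ whenever $\mathrm{D}(\mathcal{S})$ is finite. There is no substantial obstacle here: the argument is a routine manipulation of the two definitions. The only minor point to keep track of is whether the descending iteration might collapse $T_1$ to the empty sequence, but this can happen only in the monoid case, where the convention $\sigma(\varepsilon) = 0_{\mathcal{S}}$ is in force and $\varepsilon$ is a legitimate subsequence of length $0 \leq \mathrm{D}(\mathcal{S}) - 1$, so the argument goes through unchanged.
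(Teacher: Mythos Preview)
Your proposal is correct and follows essentially the same approach as the paper: both directions are direct unwindings of the definitions, with the only cosmetic difference that for the inequality $\mathrm{d}(\mathcal{S})\leq \mathrm{D}(\mathcal{S})-1$ the paper takes in one step a shortest subsequence $V'$ with $\sigma(V')=\sigma(V)$ (which is then empty or irreducible), whereas you reach the same conclusion by iterating the reducibility step.
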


\begin{proof} \ Suppose first that ${\rm d}(\mathcal{S})$ is finite. Take an arbitrary sequence $T\in\mathcal{F}(\mathcal{S})$ of length at least ${\rm d}(\mathcal{S})+1$.  By the definition
of ${\rm d}(\mathcal{S})$, there exists a subsequence $T'$ of $T$ with $|T'|\leq {\rm d}(\mathcal{S})<|T|$ such that $\sigma(T')=\sigma(T)$, which implies that $T$ is reducible. By the arbitrariness of $T$, we have ${\rm D}(\mathcal{S})$ is finite and ${\rm D}(\mathcal{S})\leq
{\rm d}(\mathcal{S})+1.$

Now suppose that ${\rm D}(\mathcal{S})$ is finite. Let $V\in \mathcal{F}(\mathcal{S})$ be an arbitrary sequence. Take a shortest subsequence $V'$ of $V$ such that $\sigma(V')=\sigma(V)$. By the minimality of $|V'|$, we have that $V'$ is either the empty sequence $\varepsilon$ or an irreducible sequence. It follows that $|V'|\leq {\rm D}(\mathcal{S})-1$. By the arbitrariness of $V$, we have that ${\rm d}(\mathcal{S})$ is finite and ${\rm d}(\mathcal{S})\leq
{\rm D}(\mathcal{S})-1.$
This competes the proof of this proposition.  \end{proof}

By proposition \ref{proposition Da(S) and D(S)}, we can derive the following proposition immediately.

\begin{prop} \ Let $\mathcal{S}$ be a commutative Noetherian semigroup. Then ${\rm D}(\mathcal{S})$ and ${\rm d}(\mathcal{S})$ is finite if, and only if, $|H_a|$ is bounded for all $a\in \mathcal{S}$, i.e., there exists an integer $\mathcal{M}$ such that $|H_a|<\mathcal{M}$ for all $a\in \mathcal{S}$.
\end{prop}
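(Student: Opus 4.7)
Plan: I would derive the proposition as a direct synthesis of Proposition \ref{proposition Da(S) and D(S)}(ii), the preceding proposition of this section (which gives ${\rm D}(\mathcal{S})<\infty \Leftrightarrow {\rm d}(\mathcal{S})<\infty$), and Corollary \ref{corollary in a.c.c.}. Corollary \ref{corollary in a.c.c.} is available here because every commutative Noetherian semigroup satisfies the a.c.c.\ on principal ideals (as noted in the paragraph immediately following that corollary); it supplies the sandwich
$$\tfrac{1}{2}\,{\rm D}(\Gamma(H_a)) \le {\rm D}_a(\mathcal{S}) \le \Psi(a)+{\rm D}(\Gamma(H_a))-1$$
for every $a$ with $|H_a|<\infty$, together with the identity $|\Gamma(H_a)|=|H_a|$ from Conclusion (i) of Lemma \ref{Lemma schutzenburger group}.

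For the forward direction I would argue as follows. Assume ${\rm D}(\mathcal{S})<\infty$; Proposition \ref{proposition Da(S) and D(S)}(ii) then supplies a uniform constant $\mathcal{M}'$ with ${\rm D}_a(\mathcal{S})\le \mathcal{M}'$ for every $a$. Each ${\rm D}_a(\mathcal{S})$ being finite, Corollary \ref{corollary in a.c.c.} forces $|H_a|$ to be finite, and the displayed lower bound gives the uniform estimate ${\rm D}(\Gamma(H_a))\le 2\mathcal{M}'$. I would then close the argument by observing that for any finite abelian group $G=\mathbb{Z}/n_1\oplus\cdots\oplus\mathbb{Z}/n_r$ in invariant-factor form with $n_i\ge 2$, the standard inequality ${\rm D}(G)\ge 1+\sum (n_i-1)$ yields both $r\le {\rm D}(G)-1$ and each $n_i\le {\rm D}(G)$, hence $|G|\le {\rm D}(G)^{{\rm D}(G)-1}$. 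Applied to $G=\Gamma(H_a)$, this converts the uniform bound on ${\rm D}(\Gamma(H_a))$ into a uniform bound on $|H_a|$.

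For the reverse direction, suppose $|H_a|\le \mathcal{M}$ uniformly. Then ${\rm D}(\Gamma(H_a))\le |\Gamma(H_a)|=|H_a|\le \mathcal{M}$ uniformly, and the upper bound of Corollary \ref{corollary in a.c.c.} specializes to ${\rm D}_a(\mathcal{S})\le \Psi(a)+\mathcal{M}-1$. To then invoke Proposition \ref{proposition Da(S) and D(S)}(ii) and conclude ${\rm D}(\mathcal{S})<\infty$, I would still need a uniform bound on $\Psi(a)$. This is the step I expect to be the main obstacle, since Noetherianness by itself only makes each individual ascending principal-ideal chain terminate and does not come packaged with a length bound. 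My plan is to lift a strictly ascending principal-ideal chain $(a)\subsetneq (a_1)\subsetneq\cdots\subsetneq (a_\ell)$ to a strictly ascending chain of Rees-type (or closely related) congruences on $\mathcal{S}$ whose length is governed by $\mathcal{M}$, and then invoke the a.c.c.\ for congruences to cap $\ell$ uniformly in terms of $\mathcal{M}$, so that the upper bound of Corollary \ref{corollary in a.c.c.} gives the uniform boundedness of ${\rm D}_a(\mathcal{S})$ required by Proposition \ref{proposition Da(S) and D(S)}(ii).
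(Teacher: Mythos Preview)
Your forward direction is sound and actually more detailed than anything the paper provides: the paper's entire argument is the single sentence ``By Proposition~\ref{proposition Da(S) and D(S)}, we can derive the following proposition immediately,'' so your route through Corollary~\ref{corollary in a.c.c.} together with the elementary estimate $|G|\le {\rm D}(G)^{{\rm D}(G)-1}$ is exactly what one must supply to make that sentence honest in one direction.

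For the reverse direction you have correctly put your finger on the real difficulty---a uniform bound on $\Psi(a)$---but the plan you sketch cannot close it. The ascending chain condition on congruences only says that each individual ascending chain terminates; it gives no uniform bound on chain lengths, so passing from a principal-ideal chain of length $\ell$ to a Rees-congruence chain of length $\ell$ and then ``invoking a.c.c.'' tells you nothing about the size of $\ell$. In fact the gap is not repairable, because the ``if'' direction of the proposition is false as stated. Take $\mathcal{S}=(\mathbb{N}_0,+)$. This monoid is Noetherian in the paper's sense: every non-identity congruence on $\mathbb{N}_0$ is of the form ``$a\sim b$ iff $a=b$, or $a,b\ge m$ and $d\mid a-b$'' for some threshold $m\ge 0$ and period $d\ge 1$, and along a strictly ascending chain of such congruences $m$ weakly decreases in $\mathbb{N}_0$ while $d$ passes to a divisor, so every ascending chain of congruences terminates. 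Every $\mathcal{H}$-class of $\mathbb{N}_0$ is a singleton, hence $|H_a|\le 1$ uniformly; yet $1^{[n]}$ is an irreducible sequence for every $n\ge 1$, so ${\rm D}(\mathbb{N}_0)=\infty$ and indeed $\Psi(a)=a$ is unbounded. Thus the obstacle you flagged is genuine, and neither your Rees-congruence plan nor the paper's one-line claim can get around it without an additional hypothesis (for instance, a uniform bound on $\Psi(a)$).
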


We close this paper by proposing the following question.

\medskip

\noindent \textbf{Question 2.} \ Let $\mathcal{S}$ be any commutative semigroup. From the point of view of semigroup's structure, does there exists a sufficient and necessary condition to decide whether ${\rm D}(\mathcal{S})$ is finite or infinite?

\bigskip

\noindent {\bf Acknowledgements}

\noindent  This work is supported by NSFC (11301381, 11271207), Science and Technology Development Fund of Tianjin Higher
Institutions (20121003).

\bigskip

\end{document}